\documentclass[12pt]{amsart}
\usepackage[top=35mm, bottom=35mm, left=25mm, right=25mm]{geometry}
\usepackage{xcolor}
\usepackage{bookmark}
\usepackage[all]{xy}  
\usepackage{graphicx}
\usepackage{amssymb,amsmath,amsthm, amsfonts}
\usepackage{mathtools}

\usepackage{fancyhdr}
\pagestyle{fancy}
\fancyhead{} 
\fancyhead[RO]{\thepage}
\fancyhead[CO]{\bfseries Gradient Estimates And Liouville Theorems}
\fancyhead[LE]{\thepage}
\fancyhead[CE]{\bfseries YOUDE WANG AND AIQI ZHANG}
\fancyfoot[CE,CO]{\empty}

\newtheorem{thm}{Theorem}[section]
\newtheorem{lem}[thm]{Lemma}

\newtheorem{cor}[thm]{Corollary}

\theoremstyle{definition}

\newtheorem*{rem*}{Remark}

\newcommand{\R}{\mathbb R}

\numberwithin{equation}{section}

\begin{document}
\title{Gradient Estimate for solutions of $\Delta v+v^r-v^s= 0$ on a complete Riemannian Manifold}

\author{Youde Wang}
\address{1. School of Mathematics and Information Sciences, Guangzhou University; 2. Hua Loo-Keng Key Laboratory
of Mathematics, Institute of Mathematics, Academy of Mathematics and Systems Science, Chinese Academy
of Sciences, Beijing 100190, China; 3. School of Mathematical Sciences, University of Chinese Academy of Sciences,
Beijing 100049, China.}
\email{wyd@math.ac.cn}

\author{Aiqi Zhang}
\address{School of Mathematics and Information Sciences, Guangzhou University}
\email{zhangaiqi@gzdx.wecom.work}

\date{\today}

\begin{abstract}
In this paper we consider the gradient estimates on positive solutions to the following elliptic equation defined on a complete Riemannian manifold $(M,\,g)$: $$\Delta v+v^r-v^s= 0,$$
where $r$ and $s$ are two real constants.

When$(M,\,g)$ satisfies $Ric \geq -(n-1)\kappa$ (where $n\geq2$ is the dimension of $M$ and $\kappa$ is a nonnegative constant), we employ the Nash-Moser iteration technique to derive a Cheng-Yau's type gradient estimate for positive solution to the above equation under some suitable geometric and analysis conditions. Moreover, it is shown that when the Ricci curvature of $M$ is nonnegative, this elliptic equation does not admit any positive solution except for $u\equiv 1$ if $r<s$ and $$1<r<\frac{n+3}{n-1}\quad\quad ~~\mbox{or}~~\quad 1<s<\frac{n+3}{n-1}.$$

\end{abstract}

\maketitle

\section{Introduction}
In the last half century the following semi-linear elliptic equation defined on $\mathbb{R}^n$
$$\Delta u + h(x, u)=0,$$
where $h: \mathbb{R}^n\times\mathbb{R}\to\mathbb{R}$ is a continuous or smooth function, attracted many mathematicians to pay attention to the study on the existence, sigularity and various symmetries of its solutions. For instance, Caffarelli, Gidas and Spruck in \cite{CGS} discussed the solutions to some special form of the above equation, written by
$$\Delta u + g(u)=0,$$
with an isolated singularity at the origin, and studied non-negative smooth solutions of the conformal invariant equation
\begin{equation*}
\Delta u+u^{(n+2)/(n-2)}=0,
\end{equation*}
where $n\geq3$. Later, W.-X. Chen and C.-M. Li classified the solutions to the following equation
$$\Delta u+u^r=0$$ in the critical or subcritical case in \cite{CL}, and C.-M. Li \cite{Li} simplified and further exploited the ``measure theoretic" variation, introduced in \cite{CGS}, of the method of moving planes.

On the other hand, one also studied the following prescribed scalar curvature equation (PSE) on $\R^n$
\begin{equation*}
\Delta u + Ku^{(n+2)/(n-2)}=0,
\end{equation*}
where $n\geq3$ and $K: \R^n\to\R$ is a smooth function. It was shown by Ni in \cite{N} that if $K$ is bounded and $|K|$ decays in a three dimensional subspace faster than $C/|x|^2$ at $\infty$ for some constant $C>0$, then the above equation has infinitely many bounded solutions in $\R^n$. It was also shown in \cite{N} that if $K$ is negative and decays slower than $-C/|x|^2$ at $\infty$, then the PSE equation has no positive solutions on $\R^n$. Later, Lin (cf. \cite{lf}) improved this result to the case when $K\leq-C/|x|^2$ at $\infty$. This gives an essentially complete picture for the negative $K$ case. When $K\geq0$, the situation is much more complicated. It was proved in \cite{N} that if $K\geq C|x|^2$, then the PSE equation admits no positive solutions on $\R^n$. For the case $K$ is bounded, this problem was studied by Ding and Ni in \cite{DN1} by using ``finite domain approximation". In particular, they proved the following,
\begin{thm}[\cite{DN1}]\label{dn}
For any $b>0$, the equation $$\Delta u + au^{r}=0$$ on $\R^n$, where $r\geq (n+2)/(n-2)$ and $a$ is positive constant, possesses a positive solution $v$ with $\|v\|_{L^{\infty}}=b$.
\end{thm}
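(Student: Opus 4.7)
The plan is to construct the required positive solution as a radially symmetric profile produced by an ODE initial-value problem, use a monotonic energy to establish global existence, and rule out zeros of the profile via a Pohozaev-type identity that exploits the super-critical hypothesis $r\geq(n+2)/(n-2)$.

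First, observe that if $u$ is a positive radial solution of $\Delta u+u^r=0$ on $\R^n$ with $u(0)=1$ whose maximum is attained at the origin, then for any $a,b>0$ the rescaled function $v(x):=b\,u(\lambda x)$ with $\lambda^2=a\,b^{r-1}$ solves $\Delta v+av^r=0$ and satisfies $\|v\|_{L^{\infty}}=b$. It therefore suffices to produce one such $u$. Writing $u(x)=u(t)$ with $t=|x|$, we are reduced to the ODE
\[
u''(t)+\frac{n-1}{t}u'(t)+u(t)^r=0,\qquad u(0)=1,\ u'(0)=0.
\]
Recasting this as an integral equation to absorb the singular coefficient at $t=0$ and applying a standard Picard iteration produces a unique $C^2$ solution on a maximal interval $[0,T_{\max})$.

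Next, introduce the energy $E(t)=\tfrac12 u'(t)^2+\tfrac{1}{r+1}u(t)^{r+1}$. A direct computation using the ODE yields $E'(t)=-\tfrac{n-1}{t}u'(t)^2\leq 0$, hence $E(t)\leq E(0)=1/(r+1)$. As long as $u$ stays non-negative this gives $0\leq u\leq 1$ and a uniform bound on $|u'|$, and the standard continuation criterion forces $T_{\max}=+\infty$. Moreover, integrating $(t^{n-1}u')'=-t^{n-1}u^r$ shows $u'<0$ for $t>0$ whenever $u>0$, so $u$ is strictly decreasing on its positivity interval, which secures $\|u\|_{L^{\infty}}=1$ once positivity is confirmed.

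Finally, to preclude zeros of $u$, suppose for contradiction that $T:=\inf\{t>0:u(t)=0\}$ is finite. Then $u$ is a smooth positive classical solution of $\Delta u+u^r=0$ in $B_T\subset\R^n$ with $u\equiv 0$ on $\partial B_T$. Testing the equation against $x\cdot\nabla u+\tfrac{n-2}{2}u$ and integrating by parts gives the Pohozaev identity
\[
\frac{(n+2)-(n-2)r}{2(r+1)}\int_{B_T}u^{r+1}\,dx=\frac{T}{2}\int_{\partial B_T}|\nabla u|^2\,dS.
\]
When $r>(n+2)/(n-2)$ the left-hand side is strictly negative while the right-hand side is non-negative, an immediate contradiction. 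In the critical case $r=(n+2)/(n-2)$ the left side vanishes, forcing $|\nabla u|\equiv 0$ on $\partial B_T$, which contradicts the Hopf boundary-point lemma applied at $T$. Hence $u>0$ on $[0,\infty)$ and the scaling above yields the required $v$. The main obstacle is precisely this positivity step: the energy bound alone cannot exclude zero crossings, and it is the super-critical assumption $r\geq(n+2)/(n-2)$ that makes Pohozaev's identity decisive, with the critical borderline requiring the separate Hopf argument.
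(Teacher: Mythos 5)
Your proof is correct, but note that the paper does not prove this statement at all: Theorem~\ref{dn} is quoted verbatim from Ding--Ni~\cite{DN1}, and the surrounding text attributes their method to a ``finite domain approximation'' (solve a Dirichlet problem on expanding balls $B_R$, obtain uniform estimates, and pass $R\to\infty$). Your route is genuinely different and, for constant coefficients, cleaner: after the scaling reduction to $a=b=1$, you shoot the radial ODE
\[
u''+\frac{n-1}{t}u'+u^r=0,\qquad u(0)=1,\ u'(0)=0,
\]
use the Lyapunov function $E=\tfrac12(u')^2+\tfrac{1}{r+1}u^{r+1}$ (with $E'=-\tfrac{n-1}{t}(u')^2\le 0$) to get $0<u\le1$ and $|u'|$ bounded while $u>0$, and then exclude a first zero $T$ via Pohozaev's identity
\[
\frac{(n+2)-(n-2)r}{2(r+1)}\int_{B_T}u^{r+1}\,dx=\frac{T}{2}\int_{\partial B_T}|\nabla u|^2\,dS,
\]
which is contradictory for $r>(n+2)/(n-2)$ and, in the borderline case $r=(n+2)/(n-2)$, forces $u'(T)=0$, contradicting Hopf's boundary-point lemma (equivalently, backward ODE uniqueness, since $s\mapsto s^r$ is $C^1$ at $0$ for $r>1$). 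All the pieces check out: the multiplier $x\cdot\nabla u+\tfrac{n-2}{2}u$ yields exactly the stated identity, $(t^{n-1}u')'=-t^{n-1}u^r<0$ gives strict monotonicity so $\|u\|_\infty=1$, and the regularity $u\in C^2([0,T])$ needed for Pohozaev follows from the ODE because $u^r\to 0$ continuously as $u\to 0^+$. The trade-off: your shooting-plus-Pohozaev argument is self-contained and elementary for constant $a$, whereas Ding--Ni's finite domain approximation is built to handle a bounded, possibly non-constant $K(x)$ and therefore does more work than needed here.
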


Very recently, Y.-D. Wang and G.-D. Wei \cite{WW} adopted the Nash-Moser iteration to study the nonexistence of positive solutions to the above Lane-Emden equation with a positive constant $a$, i.e.,
$$\Delta u+au^{r}=0$$
defined on a noncompact complete Riemannian manifold $(M, g)$ with $\dim(M)=n \geq 3$, and improve some results in \cite{PWW}. Later, inspired by the work of X.-D. Wang and L. Zhang \cite{WZ}, J. He, Y.-D. Wang and G.-D. Wei \cite{He-Wang-Wei} also discussed the gradient estimates and Liouville type theorems for the positive solution to the following generalized Lane-Emden equation
$$\Delta_pu + au^r=0.$$
Especially, the results obtained in \cite{WW} are also improved. It is shown in \cite{He-Wang-Wei} that, if the Ricci curvature of underlying manifold is nonnegative and
$$r\in\left(-\infty,\, (n+3)/(n-1)\right)$$
the above equation with $a>0$ and $p=2$ does not admit any positive solution.

It is worthy to point out that the case $a<0$ is also discussed in \cite{He-Wang-Wei}. Inspired by \cite{He-Wang-Wei}, one would like to ask naturally what happens if the nonlinear term in Lane-Emden equation is replaced by $v^r-v^s$. More precisely, one would like to know how $r$ and $s$ affect each other. So, in this paper we focus on the following elliptic equation defined on a complete Riemannian manifold $(M,\,g)$:
\begin{equation}\label{eqno1.1}
\Delta v+v^r-v^s= 0,
\end{equation}
where $r$ and $s$ are real constants, $\Delta$ is the Laplace-Beltrami operator on $(M,\,g)$ with respect to the metric $g$. In other words, here $h(x, u)$ is of the special form $h(x, u)\equiv u^r-u^s$. In fact, in the case $r=1$ and $s=3$ this equation is just the well-known Allen-Cahn equation (see \cite{DWY})
$$\Delta v + v(1-v^2)=0.$$

It is worthy to point out that the method adopted here can be used to deal with the general equation $\Delta u + h(x, u)=0$ under some technical conditions and we will discuss the general equation in a forthcoming paper.

In the sequel, we always let $(M,\,g)$ be a complete Riemannian manifold with Ricci curvature $Ric \geq -(n-1)\kappa$. For the sake of convenience, we need to make some conventions firstly. Throughout this paper, unless otherwise mentioned, we always assume $\kappa\geq0$, $n\geq 2$ is the dimension of $M$, $r$ and $s$ are two real constants. Moreover, we denote $B_{R}=B(o,\,R)$ for any $R>0$.

Now, we are ready to state our results.

\begin{thm}
\label{thm1}
Let $(M,\,g)$ be a complete Riemannian manifold with $Ric \geq -(n-1)\kappa$ and $\dim(M)\geq 2$. Assume that $v$ is a smooth positive solution of (\ref{eqno1.1}) on the geodesic ball $B_R\subset M$.
If $r\leq s$ and $1<r<\frac{n+3}{n-1}$, or $r\leq s$ and $1<s<\frac{n+3}{n-1}$,
then we have:
\begin{equation}\label{eqno1.2}
\frac{\left|\nabla v\right|^2}{v^2}\leq c(n,\,r,\,s)\frac{\left(1+\sqrt\kappa R\right)^2}{R^2}\quad\mbox{on}~B_{R/2}.
\end{equation}
\end{thm}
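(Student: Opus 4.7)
The plan is to follow the Cheng--Yau strategy combined with Nash--Moser iteration, in the vein of Wang--Wei \cite{WW} and He--Wang--Wei \cite{He-Wang-Wei}. First I would make the logarithmic substitution $u=\log v$ so that $w:=|\nabla u|^2=|\nabla v|^2/v^2$ is the quantity we wish to bound. The equation (\ref{eqno1.1}) transcribes as $\Delta u=-w-v^{r-1}+v^{s-1}$. Plugging this into Bochner's formula, using the Ricci lower bound $\mathrm{Ric}\geq -(n-1)\kappa$ and $|\nabla^2 u|^2\geq(\Delta u)^2/n$, and expanding $(\Delta u)^2=w^2+2w(v^{r-1}-v^{s-1})+(v^{r-1}-v^{s-1})^2$, one arrives at the pointwise differential inequality
\begin{equation*}
\tfrac{1}{2}\Delta w+\langle\nabla u,\nabla w\rangle\;\geq\;\tfrac{w^2}{n}+\tfrac{n+2-nr}{n}\,v^{r-1}w+\tfrac{ns-n-2}{n}\,v^{s-1}w+\tfrac{(v^{r-1}-v^{s-1})^2}{n}-(n-1)\kappa w.
\end{equation*}
The square term is a good nonnegative contribution, while the two middle terms have indefinite sign (depending on whether $r,s$ lie above or below $(n+2)/n$) and play the role of error terms.

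Next I would set up the Nash--Moser iteration. For a cutoff $\eta\in C_c^\infty(B_R)$ with $\eta\equiv 1$ on $B_{R/2}$ and $p\geq 1$, multiply the displayed inequality by $w^p\eta^2$ and integrate over $B_R$. Integration by parts converts $\int\Delta w\cdot w^p\eta^2$ into $p\int|\nabla w|^2 w^{p-1}\eta^2$ plus gradient--cutoff cross terms, and the drift term can be rewritten using $\Delta u=-w-v^{r-1}+v^{s-1}$. Young's inequality absorbs the cross terms, while the $|\nabla w|^2$ content feeds into the local Sobolev inequality on $B_R$ (valid with constant depending only on $n$ and $\sqrt\kappa R$, via volume doubling). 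The outcome is a Caccioppoli--type reverse-H\"older inequality relating $\|w\|_{L^{(p+2)\chi}(B_{r_1})}$ to $\|w\|_{L^{p+1}(B_{r_2})}$ for $R/2\leq r_1<r_2\leq R$, where $\chi=n/(n-2)$ for $n\geq 3$ (with a suitable modification for $n=2$), together with extra terms involving $v^{r-1}$ and $v^{s-1}$.

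The main obstacle is controlling those extra terms, namely $\int v^{r-1}w^{p+1}\eta^2$ and $\int v^{s-1}w^{p+1}\eta^2$. I would split each of them by weighted Young into a small multiple of $\int w^{p+2}\eta^2$ (absorbed into the leading good term) plus a high-power-of-$v$ remainder, and then dominate the remainder using the nonnegative integrand $(v^{r-1}-v^{s-1})^2/n$ that the Bochner step produced, combined with integration-by-parts identities coming from $\Delta v=v^s-v^r$ that control weighted $L^p$ norms of powers of $v$. The hypothesis $r\leq s$ together with one of $1<r<(n+3)/(n-1)$ or $1<s<(n+3)/(n-1)$ is exactly the arithmetic slack needed for all exponent balances in these absorptions to close and for the constants in the Caccioppoli estimate to stay bounded uniformly in $p$; I expect the critical value $(n+3)/(n-1)$ to emerge as the largest exponent for which the Sobolev gain $\chi=n/(n-2)$ outruns the nonlinear growth of the error. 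Once the $p$-uniform reverse-H\"older inequality is in hand, the standard Moser iteration over a shrinking sequence of radii in $(R/2,R)$ converts $L^{p_0}$ control of $w$ into $L^\infty$ control on $B_{R/2}$, producing precisely the bound (\ref{eqno1.2}).
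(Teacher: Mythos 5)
Your opening Bochner computation and the general Moser-iteration framework are in the right spirit, but two essential ingredients of the actual proof are missing, and without them the argument cannot reach the stated range of exponents. First, you use the crude inequality $|\nabla^2 u|^2\geq(\Delta u)^2/n$. The paper instead chooses a frame with $\nabla u=|\nabla u|\,\xi_1$ and uses the refined bound $|\nabla^2 u|^2\geq u_{11}^2+\tfrac{1}{n-1}(\Delta u-u_{11})^2$, and — crucially — computes $\Delta(h^{\alpha})$ for $\alpha\geq 1$, using $|\nabla h|^2\geq 4h\,u_{11}^2$ so that the extra $(\alpha-1)h^{-1}|\nabla h|^2$ term boosts the coefficient of $u_{11}^2$ to $4(\alpha-1)+\tfrac{2n}{n-1}$. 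Completing the square in $u_{11}$ and then in $h$ (after using $r\leq s$ to replace one coefficient and factor out $e^{(1-r)u}-e^{(1-s)u}$) gives a \emph{pointwise} inequality $\Delta(h^{\tilde\alpha})/(\tilde\alpha h^{\tilde\alpha-1})\geq\tilde\rho h^2-2(n-1)\kappa h+\tfrac{2(n-2)}{n-1}\langle\nabla u,\nabla h\rangle$ with $\tilde\rho>0$, and the positivity of $\tilde\rho$ for \emph{all} $r$ in $(1,\tfrac{n+3}{n-1})$ requires taking $\alpha$ large. If you carry out the same pointwise absorption with only $(\Delta u)^2/n$, the coefficient balance is $(n+2-nr)^2<4$, which limits you to $1<r<\tfrac{n+4}{n}$; and even with the refined Hessian bound but $\alpha=1$ you only get $|r-\tfrac{n+1}{n-1}|<2/\sqrt{n(n-1)}$, which again misses the endpoints. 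So the $\Delta(h^{\alpha})$ trick is not an optional refinement: it is what produces $\tfrac{n+3}{n-1}$.

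Second, your plan for the error terms is off the track the paper actually takes. You propose to carry $\int v^{r-1}w^{p+1}\eta^2$ and $\int v^{s-1}w^{p+1}\eta^2$ into the iteration, split them by weighted Young, and control the high-power-of-$v$ remainders using the $(v^{r-1}-v^{s-1})^2$ integrand plus ``integration-by-parts identities from $\Delta v=v^s-v^r$.'' This does not close: Young produces terms like $\int v^{(r-1)(p+2)}\eta^2$ with $p$ going to infinity, and there is no a priori bound on such weighted norms of $v$. In the paper these terms never appear, because all $v$-dependence is eliminated \emph{before} integrating: the hypothesis $r\leq s$ is used to replace $-\bigl(\tfrac{n+1}{n-1}-s\bigr)e^{(1-s)u}$ by $-\bigl(\tfrac{n+1}{n-1}-r\bigr)e^{(1-s)u}$ so that the two linear-in-$h$ terms factor through $e^{(1-r)u}-e^{(1-s)u}$, which is then absorbed pointwise against the quadratic $(e^{(1-r)u}-e^{(1-s)u})^2$ term. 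The subsequent Saloff--Coste/Moser iteration (your outline of that part is essentially correct) is then applied to a clean inequality in $h$ alone. To fix your proof you would need to (i) adopt the frame-adapted Hessian bound, (ii) introduce $\Delta(h^{\alpha})$ and choose $\alpha=\alpha(n,r,s)$ large, and (iii) perform the two pointwise completions of squares so that no $v$-weighted integrals survive into the iteration.
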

Immediately, we have the following direct corollary:

\begin{cor}\label{cor1}
Let $(M,\,g)$ be a noncompact complete Riemannian manifold with nonnegative Ricci curvature and $\dim(M)\geq 2$. The equation (\ref{eqno1.1}) admits a unique positive solution $v\equiv 1$ if $r<s$ and
$$1<r<\frac{n+3}{n-1}\quad\quad ~~\mbox{or}~~\quad 1<s<\frac{n+3}{n-1}.$$
\end{cor}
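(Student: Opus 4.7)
The plan is to derive the corollary as a direct consequence of the gradient estimate in Theorem \ref{thm1} by letting the radius tend to infinity. Since $(M,g)$ is noncompact and complete with $\mathrm{Ric}\ge 0$, one may take $\kappa=0$ in the hypothesis of Theorem \ref{thm1}. The standing assumption $r<s$ implies $r\le s$, and combined with either $1<r<\frac{n+3}{n-1}$ or $1<s<\frac{n+3}{n-1}$, the hypotheses of Theorem \ref{thm1} are fulfilled for any geodesic ball $B_R\subset M$ centered at a fixed point $o\in M$.

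First I would fix an arbitrary point $x\in M$, choose $R>2\,d(x,o)$ so that $x\in B_{R/2}$, and apply \eqref{eqno1.2} with $\kappa=0$ to obtain
\begin{equation*}
\frac{|\nabla v|^2(x)}{v^2(x)}\le \frac{c(n,r,s)}{R^2}.
\end{equation*}
Since $M$ is noncompact and complete, the Hopf--Rinow theorem allows $R\to\infty$, so the right-hand side tends to zero. Hence $|\nabla v|(x)=0$ for every $x\in M$, and therefore $v$ is a positive constant on the connected manifold $M$.

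Next, writing $v\equiv c>0$ and substituting into \eqref{eqno1.1} yields $c^r-c^s=0$, i.e. $c^r(1-c^{s-r})=0$. Because $c>0$ and the exponent $s-r>0$ is nonzero, this forces $c^{s-r}=1$, and positivity then gives $c=1$. Conversely, $v\equiv 1$ is clearly a solution of \eqref{eqno1.1}, so uniqueness follows.

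There is no serious obstacle here; the only thing to be careful about is that Theorem \ref{thm1} is formulated on geodesic balls and requires $r\le s$ together with a subcritical bound on $r$ or $s$, both of which are guaranteed by the hypotheses of the corollary. If one wished to remove the strict inequality $r<s$, the algebraic step $c^{s-r}=1\Rightarrow c=1$ would break down, which explains why the corollary is stated with $r<s$ rather than merely $r\le s$.
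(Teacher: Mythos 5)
Your proof is correct and follows essentially the same route as the paper: apply Theorem \ref{thm1} with $\kappa=0$, send $R\to\infty$ to force $\nabla v\equiv 0$, then use $r<s$ in the algebraic identity $c^r-c^s=0$ to conclude $c=1$. Your treatment is if anything slightly more careful than the paper's, since you explicitly invoke completeness to let $R\to\infty$ for each fixed $x$ and spell out the factorization $c^r(1-c^{s-r})=0$, whereas the paper simply asserts $v^r-v^s\neq 0$ for $v\neq 1$.
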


Moreover, according to the above corollary we have the following conclusion:
\begin{cor}\label{cor2}
Let $(M,\,g)$ be a noncompact complete Riemannian manifold with nonnegative Ricci curvature and $\dim(M)=2$. Then, the Allen-Cahn equation on $(M, g)$ does not admit any positive solution except for $v\equiv 1$.
\end{cor}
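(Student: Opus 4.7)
The plan is to derive Corollary \ref{cor2} as an immediate specialization of Corollary \ref{cor1}. First I would rewrite the Allen--Cahn equation $\Delta v + v(1-v^2) = 0$ in the form of equation \eqref{eqno1.1} by setting $r = 1$ and $s = 3$, so that $v^r - v^s = v - v^3$. Thus any positive solution of the Allen--Cahn equation is a positive solution of \eqref{eqno1.1} with these particular exponents.

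Next I would verify the hypotheses of Corollary \ref{cor1} in the case $\dim(M) = n = 2$. The geometric assumptions, namely noncompactness, completeness, and nonnegative Ricci curvature, are exactly those given in the statement of Corollary \ref{cor2}. The inequality $r < s$ is immediate from $1 < 3$. For the critical-exponent condition, when $n = 2$ we have $\frac{n+3}{n-1} = 5$. The value $r = 1$ sits on the boundary of the admissible range and does not satisfy the strict inequality $1 < r$, so the first alternative in Corollary \ref{cor1} is unavailable. However, the second alternative does hold: $1 < s = 3 < 5$. Hence Corollary \ref{cor1} applies and forces $v \equiv 1$.

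There is no genuine obstacle beyond this verification, provided one accepts Corollary \ref{cor1} (and therefore Theorem \ref{thm1}, whose proof via Nash--Moser iteration carries all the analytic weight). The only delicate point worth highlighting is that the linear term in the Allen--Cahn equation produces an exponent $r = 1$ exactly on the boundary of the admissible interval, so one cannot conclude rigidity from the $r$-clause of Corollary \ref{cor1}; it is essential that the hypothesis of Corollary \ref{cor1} is stated as a disjunction, permitting one to appeal to the $s$-clause instead. In dimension $n = 2$ the generous upper bound $5$ comfortably accommodates the cubic exponent $s = 3$, which is why the conclusion is clean precisely in this dimension.
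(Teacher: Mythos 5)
Your proposal is correct and follows exactly the route the paper intends: specialize Corollary~\ref{cor1} to $r=1$, $s=3$, $n=2$, noting that $r=1$ fails the strict inequality but $s=3<5=\frac{n+3}{n-1}$ satisfies the second clause of the disjunction. Your remark that the bound $\frac{n+3}{n-1}$ admits $s=3$ only when $n=2$ correctly explains why the paper restricts Corollary~\ref{cor2} to dimension two.
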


It is worthy to point out that our method is useful for the equation \eqref{eqno1.1} on n-dimensional complete Riemannian manifolds for any $n\geq2$. We will firstly discuss the case $n\geq3$ concretely, then briefly discuss the case $n=2$.

The rest of this paper is  organized as follows. In Section 2, we will give a detailed estimate of Laplacian of $$|\nabla\ln v|^2=\dfrac{|\nabla v|^2}{v^2},$$ where $v$ is the positive solution of the equation (\ref{eqno1.1}) with $r$ and $s$ satisfying the conditions we set in the Theorem \ref{thm1}. Then we need to recall the Saloff-Coste's Sobolev embedding theorem. In Section 3, we use the Moser iteration to prove Theorem \ref{thm1} in the case $n\geq3$, then we briefly discuss the case $n=2$ using the same approach, and finally we give the proof of Corollary \ref{cor1}.

\section{Prelimanary}
Let $v$ be a positive smooth solution to the elliptic equation:
\begin{equation}\label{eqno2.1}
\Delta v+v^r-v^s= 0\quad\mbox{on}~ B_R,
\end{equation}
where $r$ and $s$ are two real constants. Set $u=-\ln v$. We compute directly and obtain
$$
\Delta u=|\nabla u|^2+e^{(1-r)u}-e^{(1-s)u}.
$$
For convenience, we denote $h=|\nabla u|^2$. By a direct calculation we can verify
\begin{equation}
\label{eqno2.2}
\Delta u=h+e^{(1-r)u}-e^{(1-s)u}.
\end{equation}
\begin{lem}\label{lem2.1}
Let $h=|\nabla u|^2$ and $u=-\ln v$ where $v$ is a positive solution to \eqref{eqno1.1}. If $r\leq s$ and
$$1<r<\frac{n+3}{n-1}\quad\quad ~~\mbox{or}~~\quad 1<s<\frac{n+3}{n-1},$$
then at the point where $h\neq0$, there exist $\tilde\alpha\in\left[1,\,+\infty\right)$ and $\tilde\rho\in\left(0,\,\frac{2}{n-1}\right]$ such that
\begin{equation}\label{eqno2.3}
\frac{\Delta \left(h^{\tilde\alpha}\right)}{\tilde\alpha h^{\tilde\alpha-1}}\geq\tilde\rho h^2-2(n-1)\kappa h+\frac{2(n-2)\langle\nabla u,\,\nabla h\rangle}{n-1},
\end{equation}
where $\tilde\alpha=\tilde\alpha(n,\,r,\,s)$ and $\tilde\rho=\tilde\rho(n,\,r,\,s)$ are depend on $n,\,r$ and $s$. \end{lem}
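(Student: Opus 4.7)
The plan is to derive the inequality from the Bochner formula
$$\tfrac12\Delta h=|\nabla^2u|^2+\langle\nabla u,\nabla\Delta u\rangle+\mathrm{Ric}(\nabla u,\nabla u)$$
applied to $h=|\nabla u|^2$, combined with the refined Cauchy-Schwarz bound on the Hessian. Working in a local orthonormal frame with $e_1$ parallel to $\nabla u$, one has $u_{11}=\langle\nabla u,\nabla h\rangle/(2h)$ at points where $h\neq 0$, and discarding off-diagonal Hessian entries yields $|\nabla^2u|^2\geq u_{11}^2+\tfrac{1}{n-1}(\Delta u-u_{11})^2$. Substituting equation \eqref{eqno2.2} together with the resulting identity $\langle\nabla u,\nabla\Delta u\rangle=\langle\nabla u,\nabla h\rangle+(1-r)Ph-(1-s)Qh$ (writing $P=e^{(1-r)u}$ and $Q=e^{(1-s)u}$) turns the Bochner inequality into an explicit polynomial inequality in $B:=u_{11}$, $P$, $Q$ and $h$.

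Next I pass from $\Delta h$ to $\Delta(h^{\tilde\alpha})$ via the identity $\Delta(h^{\tilde\alpha})/(\tilde\alpha h^{\tilde\alpha-1})=\Delta h+(\tilde\alpha-1)|\nabla h|^2/h$, and use the Kato-type lower bound $|\nabla h|^2\geq 4B^2 h$ (Cauchy-Schwarz applied to $\langle\nabla u,\nabla h\rangle$) to upgrade the coefficient of $B^2$ from $\tfrac{2n}{n-1}$ to $\tfrac{2n}{n-1}+4(\tilde\alpha-1)$. The linear $\langle\nabla u,\nabla h\rangle$ pieces---one coming from the equation via $\langle\nabla u,\nabla\Delta u\rangle$, the other from $2Bh=\langle\nabla u,\nabla h\rangle$ inside the refined Hessian estimate---combine precisely into the $\tfrac{2(n-2)}{n-1}\langle\nabla u,\nabla h\rangle$ term demanded by the statement, while the Ricci bound accounts for the $-2(n-1)\kappa h$ term.

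What remains is to show that the quadratic form in $B,P,Q,h$ that is left over is bounded below by $\tilde\rho\, h^2$ for an appropriate $\tilde\rho>0$. The strategy is two successive completions of squares: first I eliminate $B$, which costs a penalty $-C_1(P-Q)^2$ and leaves a form of shape $\tfrac{2}{n-1}h^2+\beta_P Ph-\beta_Q Qh+\gamma(P-Q)^2$, with $\beta_P=\tfrac{2(n+1-r(n-1))}{n-1}$, $\beta_Q=\tfrac{2(n+1-s(n-1))}{n-1}$ and an explicit $\gamma=\gamma(n,\tilde\alpha)>0$. Depending on whether it is $r$ or $s$ that lies in $(1,\tfrac{n+3}{n-1})$, I then substitute $P=Q+X$ (respectively $Q=P+Z$); using $r\leq s$, the resulting extra linear term $2(s-r)Qh$ (respectively $2(s-r)Ph$) is non-negative and may be discarded. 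A second completion of squares in $X$ (resp.\ $Z$) reduces the problem to verifying that $\tilde\rho=\tfrac{2}{n-1}-\beta_P^2/(4\gamma)$ (resp.\ with $\beta_Q$) is positive.

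The main technical point will be the algebraic optimization in $\tilde\alpha$: a direct computation of $\gamma(n,\tilde\alpha)$ shows that $\gamma\to\tfrac{2}{n-1}$ as $\tilde\alpha\to+\infty$, so the condition $\tilde\rho>0$ becomes $(n+1-r(n-1))^2<4$, which is exactly $1<r<\tfrac{n+3}{n-1}$ (and analogously in $s$). Thus, under either hypothesis on $r$ or $s$, a sufficiently large $\tilde\alpha\in[1,+\infty)$ produces an admissible $\tilde\rho\in(0,\tfrac{2}{n-1}]$. The main obstacle is coordinating the two successive completions of squares so that no indefinite cross term in $P$ and $Q$ survives; this is exactly where the hypothesis $r\leq s$ is used, since it fixes the sign of the leftover linear term.
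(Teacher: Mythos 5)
Your proposal is correct and follows essentially the same line of argument as the paper: Bochner formula, the refined Hessian estimate $|\nabla^2 u|^2\geq u_{11}^2+\tfrac{1}{n-1}(\Delta u-u_{11})^2$ in the frame aligned with $\nabla u$, the identity $2hu_{11}=\langle\nabla u,\nabla h\rangle$, the shift from $\Delta h$ to $\Delta(h^{\tilde\alpha})$ via $|\nabla h|^2\geq 4u_{11}^2h$, a first completion of squares in $u_{11}$ producing the coefficient $\gamma(n,\tilde\alpha)=\tfrac{2(2\tilde\alpha-1)}{2(\tilde\alpha-1)(n-1)+n}$ of $(P-Q)^2$, the sign-fixing use of $r\leq s$ to discard the $2(s-r)Qh$ (or $2(s-r)Ph$) remainder, and a second completion of squares in $P-Q$ yielding $\tilde\rho=\tfrac{2}{n-1}-\tfrac{\beta_P^2}{4\gamma}$ (or with $\beta_Q$) and the limit analysis $\gamma\uparrow\tfrac{2}{n-1}$ as $\tilde\alpha\to\infty$ giving exactly the window $1<r<\tfrac{n+3}{n-1}$ (resp. $s$). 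This matches the paper's proof step by step.
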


\begin{proof}
At the point $h\neq0$, firstly, by Bochner formula we have
\begin{equation}\label{eqno2.4}
\begin{aligned}
\Delta h
= & 2|\nabla ^2 u|^2 + 2 Ric(\nabla u,\,\nabla u)+ 2\langle \nabla u,\,\nabla \Delta u\rangle\\
\geq& 2|\nabla ^2 u|^2 -2(n-1)\kappa h+ 2\langle \nabla u,\,\nabla h\rangle + 2h \left[(1-r)e^{(1-r)u}-(1-s)e^{(1-s)u} \right].
\end{aligned}
\end{equation}

Now, we choose a suitable local orthonormal frame $\left\{\xi_i \right\}_{i=1}^{n}$ such that $\nabla u=|\nabla u|\xi_1$. If
we denote $\nabla u=\sum_{i=1}^{n}u_i\xi_i$, it is easy to see
$$
u_1=|\nabla u|\quad\mbox{and}\quad u_i=0
$$
for any $2 \leq i \leq n$. Noticing that
$$
\sum_{i=2}^{n} u_{ii}=h+e^{(1-r)u}-e^{(1-s)u}-u_{11},
$$
we have
$$
\begin{aligned}
|\nabla ^2 u|^2
\geq& u_{11}^{2}+\sum_{i=2}^{n} u_{ii}^2\\
\geq& u_{11}^{2}+\frac{1}{n-1} \left( h+e^{(1-r)u}-e^{(1-s)u}-u_{11}\right)^2\\
=&\frac{h^2}{n-1}+\frac{n u_{11}^{2}}{n-1}+\frac{\left(e^{(1-r)u}-e^{(1-s)u}\right)^2}{n-1}-\frac{2hu_{11}}{n-1}\\
&-2u_{11}\frac{e^{(1-r)u}-e^{(1-s)u}}{n-1}+2h\frac{e^{(1-r)u}-e^{(1-s)u}}{n-1}.
\end{aligned}
$$
Since
$$
\begin{aligned}
2h u_{11}
=&2|\nabla u|^2\nabla^2u(\xi_1,\,\xi_1)\\
=&2|\nabla u|^2\langle\nabla_{\xi_1}du, \xi_1\rangle\\
=&2|\nabla u|^2\left[\xi_1\left(|\nabla u|\right)-\left(\nabla_{\xi_1}\xi_1\right)u\right]\\
=&2|\nabla u|^2\langle \xi_1,\,\nabla|\nabla u|\rangle\\
=&\langle |\nabla u|\xi_1,\,2|\nabla u|\nabla|\nabla u|\rangle\\
=&\langle\nabla u,\,\nabla h\rangle,
\end{aligned}
$$
we have
$$
\begin{aligned}
|\nabla ^2 u|^2
\geq&\frac{h^2}{n-1}+\frac{n u_{11}^{2}}{n-1}+\frac{\left(e^{(1-r)u}-e^{(1-s)u}\right)^2}{n-1}-\frac{\langle\nabla u,\,\nabla h\rangle}{n-1}\\
& -2u_{11}\frac{e^{(1-r)u}-e^{(1-s)u}}{n-1}+2h\frac{e^{(1-r)u}-e^{(1-s)u}}{n-1}.
\end{aligned}
$$
Hence, by substituting the above inequality into (\ref{eqno2.4}) we get
$$
\begin{aligned}
\Delta h
\geq & \frac{2h^2}{n-1}+\frac{2n u_{11}^{2}}{n-1}+\frac{2\left(e^{(1-r)u}-e^{(1-s)u}\right)^2}{n-1}-\frac{2\langle\nabla u,\,\nabla h\rangle}{n-1} -4u_{11}\frac{e^{(1-r)u}-e^{(1-s)u}}{n-1}\\
&+4h\frac{e^{(1-r)u}-e^{(1-s)u}}{n-1}-2(n-1)\kappa h+ 2\langle \nabla u,\,\nabla h\rangle + 2h  \left[(1-r)e^{(1-r)u}-(1-s)e^{(1-s)u} \right]\\
=&\frac{2h^2}{n-1}+\frac{2n u_{11}^{2}}{n-1}+\frac{2\left(e^{(1-r)u}-e^{(1-s)u}\right)^2}{n-1}-4u_{11}\frac{e^{(1-r)u}-e^{(1-s)u}}{n-1}+4h\frac{e^{(1-r)u}-e^{(1-s)u}}{n-1}\\
& + 2h \left[(1-r)e^{(1-r)u}-(1-s)e^{(1-s)u} \right]-2(n-1)\kappa h+\frac{2(n-2)\langle\nabla u,\,\nabla h\rangle}{n-1}.
\end{aligned}
$$
For any $\alpha\geq1$, we have
$$
\Delta \left(h^\alpha\right)=\alpha\left(\alpha-1\right)h^{\alpha-2}|\nabla h|^2+\alpha h^{\alpha-1}\Delta h,
$$
therefore,
\begin{equation}\label{eqno2.5}
\frac{\Delta \left(h^\alpha\right)}{\alpha h^{\alpha-1}}=(\alpha-1)h^{-1}|\nabla h|^2+\Delta h.
\end{equation}
Since
$$
\begin{aligned}
|\nabla h|^2
=&\sum_{i=1}^{n}|2u_1u_{1i}|^2\\
=&4h\sum_{i=1}^{n}u_{1i}^2\\
\geq&4hu_{11}^2,
\end{aligned}
$$
we can see that
\begin{equation}\label{eqno2.6}
\begin{aligned}
\frac{\Delta \left(h^\alpha\right)}{\alpha h^{\alpha-1}}
\geq&\frac{2h^2}{n-1}+\left[4(\alpha-1)+\frac{2n}{n-1}\right]u_{11}^{2}+\frac{2\left(e^{(1-r)u}-e^{(1-s)u}\right)^2}{n-1}\\
&-4u_{11}\frac{e^{(1-r)u}-e^{(1-s)u}}{n-1}+4h\frac{e^{(1-r)u}-e^{(1-s)u}}{n-1}\\
&+2h \left[(1-r)e^{(1-r)u}-(1-s)e^{(1-s)u} \right]\\
&-2(n-1)\kappa h+\frac{2(n-2)\langle\nabla u,\,\nabla h\rangle}{n-1}.
\end{aligned}
\end{equation}
Since
$$
\begin{aligned}
&\left[4(\alpha-1)+\frac{2n}{n-1}\right]u_{11}^{2}+\frac{2\left(e^{(1-r)u}-e^{(1-s)u}\right)^2}{n-1}-4u_{11}\frac{e^{(1-r)u}-e^{(1-s)u}}{n-1}\\
=&\frac{2\left(e^{(1-r)u}-e^{(1-s)u}\right)^2}{n-1}+\frac{4(\alpha-1)(n-1)+2n}{n-1}\left[u_{11}-\frac{e^{(1-r)u}-e^{(1-s)u}}{2(\alpha-1)(n-1)+n}\right]^2\\
&-\frac{4(\alpha-1)(n-1)+2n}{n-1}\left[\frac{e^{(1-r)u}-e^{(1-s)u}}{2(\alpha-1)(n-1)+n}\right]^2\\
\geq&\frac{2\left(e^{(1-r)u}-e^{(1-s)u}\right)^2}{n-1}-\frac{4(\alpha-1)(n-1)+2n}{n-1}\left[\frac{e^{(1-r)u}-e^{(1-s)u}}{2(\alpha-1)(n-1)+n}\right]^2\\
=&\left[\frac{2}{n-1}-\frac{1}{2(\alpha-1)(n-1)+n }\cdot\frac{2}{(n-1)}\right]\left(e^{(1-r)u}-e^{(1-s)u}\right)^2\\
=&\frac{2}{n-1}\frac{2(\alpha-1)(n-1)+n -1}{2(\alpha-1)(n-1)+n }\left(e^{(1-r)u}-e^{(1-s)u}\right)^2\\
=&\frac{2(2\alpha-1)}{2(\alpha-1)(n-1)+n}\left(e^{(1-r)u}-e^{(1-s)u}\right)^2,
\end{aligned}
$$
then, it follows
$$
\begin{aligned}
\frac{\Delta \left(h^\alpha\right)}{\alpha h^{\alpha-1}}
\geq&\frac{2h^2}{n-1}+\frac{2(2\alpha-1)}{2(\alpha-1)(n-1)+n}\left(e^{(1-r)u}-e^{(1-s)u}\right)^2+4h\frac{e^{(1-r)u}-e^{(1-s)u}}{n-1}\\
&+2h \left[(1-r)e^{(1-r)u}-(1-s)e^{(1-s)u} \right]-2(n-1)\kappa h+\frac{2(n-2)\langle\nabla u,\,\nabla h\rangle}{n-1}\\
=&\frac{2h^2}{n-1}+\frac{2(2\alpha-1)}{2(\alpha-1)(n-1)+n}\left(e^{(1-r)u}-e^{(1-s)u}\right)^2+2h\left(\frac{n+1}{n-1}-r\right)e^{(1-r)u}\\
&-2h\left(\frac{n+1}{n-1}-s\right)e^{(1-s)u}-2(n-1)\kappa h+\frac{2(n-2)\langle\nabla u,\,\nabla h\rangle}{n-1}.
\end{aligned}
$$
If $r\leq s$, from the above inequality we can see that there holds
\begin{equation}\label{add}
\begin{aligned}
\frac{\Delta \left(h^\alpha\right)}{\alpha h^{\alpha-1}}
\geq&\frac{2h^2}{n-1}+\frac{2(2\alpha-1)}{2(\alpha-1)(n-1)+n}\left(e^{(1-r)u}-e^{(1-s)u}\right)^2+2h\left(\frac{n+1}{n-1}-r\right)e^{(1-r)u}\\
&-2h\left(\frac{n+1}{n-1}-r\right)e^{(1-s)u}-2(n-1)\kappa h+\frac{2(n-2)\langle\nabla u,\,\nabla h\rangle}{n-1}\\
=&\frac{2h^2}{n-1}+\frac{2(2\alpha-1)}{2(\alpha-1)(n-1)+n}\left(e^{(1-r)u}-e^{(1-s)u}\right)^2\\
&+2h\left(\frac{n+1}{n-1}-r\right)\left(e^{(1-r)u}-e^{(1-s)u}\right)-2(n-1)\kappa h+\frac{2(n-2)\langle\nabla u,\,\nabla h\rangle}{n-1},
\end{aligned}
\end{equation}
on the other hand, there also holds true
\begin{equation}\label{add*}
\begin{aligned}
\frac{\Delta \left(h^\alpha\right)}{\alpha h^{\alpha-1}}
\geq&\frac{2h^2}{n-1}+\frac{2(2\alpha-1)}{2(\alpha-1)(n-1)+n}\left(e^{(1-r)u}-e^{(1-s)u}\right)^2+2h\left(\frac{n+1}{n-1}-s\right)e^{(1-r)u}\\
&-2h\left(\frac{n+1}{n-1}-s\right)e^{(1-s)u}-2(n-1)\kappa h+\frac{2(n-2)\langle\nabla u,\,\nabla h\rangle}{n-1}\\
=&\frac{2h^2}{n-1}+\frac{2(2\alpha-1)}{2(\alpha-1)(n-1)+n}\left(e^{(1-r)u}-e^{(1-s)u}\right)^2\\
&+2h\left(\frac{n+1}{n-1}-s\right)\left(e^{(1-r)u}-e^{(1-s)u}\right)-2(n-1)\kappa h+\frac{2(n-2)\langle\nabla u,\,\nabla h\rangle}{n-1}.
\end{aligned}
\end{equation}
Since
$$\begin{aligned}
&\frac{2(2\alpha-1)}{2(\alpha-1)(n-1)+n}\left(e^{(1-r)u}-e^{(1-s)u}\right)^2+2h\left(\frac{n+1}{n-1}-r\right)\left(e^{(1-r)u}-e^{(1-s)u}\right)\\
=&\frac{2(2\alpha-1)}{2(\alpha-1)(n-1)+n}\left[\left(e^{(1-r)u}-e^{(1-s)u}\right)^2+h\left(\frac{n+1}{n-1}-r\right)\frac{2(\alpha-1)(n-1)+n}{2(2\alpha-1)}\right]^2\\
&-\frac{2(\alpha-1)(n-1)+n}{2(2\alpha-1)}\left(\frac{n+1}{n-1}-r\right)^2 h^2\\
\geq&-\frac{2(\alpha-1)(n-1)+n}{2(2\alpha-1)}\left(\frac{n+1}{n-1}-r\right)^2 h^2
\end{aligned}$$
and
$$\begin{aligned}
&\frac{2(2\alpha-1)}{2(\alpha-1)(n-1)+n}\left(e^{(1-r)u}-e^{(1-s)u}\right)^2+2h\left(\frac{n+1}{n-1}-s\right)\left(e^{(1-r)u}-e^{(1-s)u}\right)\\
=&\frac{2(2\alpha-1)}{2(\alpha-1)(n-1)+n}\left[\left(e^{(1-r)u}-e^{(1-s)u}\right)^2+h\left(\frac{n+1}{n-1}-s\right)\frac{2(\alpha-1)(n-1)+n}{2(2\alpha-1)}\right]^2\\
&-\frac{2(\alpha-1)(n-1)+n}{2(2\alpha-1)}\left(\frac{n+1}{n-1}-s\right)^2 h^2\\
\geq&-\frac{2(\alpha-1)(n-1)+n}{2(2\alpha-1)}\left(\frac{n+1}{n-1}-s\right)^2 h^2,
\end{aligned}$$
we substitute the above two inequalities into \eqref{add} and \eqref{add*} respectively to obtain
\begin{equation}\label{eqno2.7}
\begin{aligned}
\frac{\Delta \left(h^\alpha\right)}{\alpha h^{\alpha-1}}
\geq&\left[\frac{2}{n-1}-\frac{2(\alpha-1)(n-1)+n}{2(2\alpha-1)}\left(\frac{n+1}{n-1}-r\right)^2\right]h^2\\
&-2(n-1)\kappa h+\frac{2(n-2)\langle\nabla u,\,\nabla h\rangle}{n-1}
\end{aligned}
\end{equation}
and
\begin{equation}\label{eqno2.8}
\begin{aligned}
\frac{\Delta \left(h^\alpha\right)}{\alpha h^{\alpha-1}}
\geq&\left[\frac{2}{n-1}-\frac{2(\alpha-1)(n-1)+n}{2(2\alpha-1)}\left(\frac{n+1}{n-1}-s\right)^2\right]h^2\\
&-2(n-1)\kappa h+\frac{2(n-2)\langle\nabla u,\,\nabla h\rangle}{n-1}.
\end{aligned}
\end{equation}
\medskip

Next, we need to discuss the following two cases respectively:

{\bf Case 1}:      $$r\leq s\quad\quad~~\mbox{and}~~\quad 1<r<\frac{n+3}{n-1}.$$

For this case, we focus on (\ref{eqno2.7}). In the present situation, we have
$$ 0\leq \left|\frac{n+1}{n-1}-r\right|<\frac{2}{n-1}\quad\quad\mbox{and}\quad\quad 0\leq\left(\frac{n+1}{n-1}-r\right)^2<\frac{4}{(n-1)^2}.$$
Let
$$k(\alpha)=\frac{2(\alpha-1)(n-1)+n}{2(2\alpha-1)}.$$
Since $k(\alpha)$ is a monotone decreasing function with respect to $\alpha$ on $[1,\,\infty)$ and
$$
\lim\limits_{\alpha\to+\infty}\frac{2(\alpha-1)(n-1)+n}{2(2\alpha-1)}=\frac{n-1}{2},
$$
we obtain that for any $\alpha\geq1$ there holds true
$$
\frac{n-1}{2}\leq k(\alpha)\leq\frac{n}{2}.
$$
Let $$t=\left(\frac{n+1}{n-1}-r\right)^2\in\left[0,\,\frac{4}{(n-1)^2}\right).$$
It is easy to see that
$$k(\alpha)t<\frac{n}{2}\cdot\frac{4}{n(n-1)}=\frac{2}{n-1},$$
as $t\in\left[0,\,\frac{4}{n(n-1)}\right)$.
For any $t\in\left[\frac{4}{n(n-1)},\,\frac{4}{(n-1)^2}\right)$, it is not difficult to find that there exists some $k_t\in \left(\frac{n-1}{2},\,\frac{n}{2}\right]$ such that
$$tk_t=\frac{2}{n-1}.$$
Therefore, for any $k(\alpha)\in\left(\frac{n-1}{2},\,k_t\right)$, we have $$k(\alpha)t<\frac{2}{n-1}.$$
Hence, for any given $1< r< \frac{n+3}{n-1}$, we can choose $\alpha=\alpha_{n,\,r}$ large enough such that
$$k(\alpha_{n,\,r})<k_t$$
where $t=\left(\frac{n+1}{n-1}-r\right)^2$. Hence, for such $\alpha=\alpha_{n,\,r}$ we have
$$
\frac{2(\alpha-1)(n-1)+n}{2(2\alpha-1)}\left(\frac{n+1}{n-1}-r\right)^2<\frac{2}{n-1}
$$
and
$$
\rho(n,\,r,\,\alpha)=\frac{2}{n-1}-\frac{2(\alpha-1)(n-1)+n}{2(2\alpha-1)}\left(\frac{n+1}{n-1}-r\right)^2>0.
$$
\medskip

{\bf Case 2}:      $$r\leq s\quad\quad~~\mbox{and}~~\quad 1<s<\frac{n+3}{n-1}.$$

For this case, we focus on (\ref{eqno2.8}), and similarly we have
$$ 0\leq \left|\frac{n+1}{n-1}-s\right|<\frac{2}{n-1}\quad\quad\mbox{and}\quad\quad 0\leq\left(\frac{n+1}{n-1}-s\right)^2<\frac{4}{(n-1)^2}.$$
Hence, by the same way as in the case 1 we can see that there exists $\alpha=\alpha_{n,\,s}$ large enough such that, for any $1< s< \frac{n+3}{n-1}$, there holds true
$$
\frac{2(\alpha-1)(n-1)+n}{2(2\alpha-1)}\left(\frac{n+1}{n-1}-s\right)^2<\frac{2}{n-1}
$$
and
$$
\rho(n,\,s,\,\alpha)=\frac{2}{n-1}-\frac{2(\alpha-1)(n-1)+n}{2(2\alpha-1)}\left(\frac{n+1}{n-1}-s\right)^2>0.
$$

Based on the above argument, now we let
$$\tilde\alpha=\tilde\alpha(n,\,r,\,s)=
\begin{cases}
\alpha_{n,\,r},\,\quad r\leq s ~~\mbox{and}~~1<r<\frac{n+3}{n-1},\\
\alpha_{n,\,s},\,\quad r\leq s ~~\mbox{and}~~1<s<\frac{n+3}{n-1},
\end{cases}$$
and
$$\tilde\rho=\tilde\rho(n,\,r,\,s)=
\begin{cases}
\rho(n,\,r,\,\alpha_{n,\,r}),\,\quad r\leq s ~~\mbox{and}~~1<r<\frac{n+3}{n-1},\\
\rho(n,\,s,\,\alpha_{n,\,s}),\,\quad r\leq s ~~\mbox{and}~~1<s<\frac{n+3}{n-1}.
\end{cases}$$
Obviously, we obtain the required (\ref{eqno2.3}). Thus, the proof of Lemma \ref{lem2.1} is completed.\end{proof}

Next, we need to recall the Saloff-Coste's Sobolev embedding theorem (Theorem 3.1 in \cite{L. Saloff-Coste1992}), which plays a key role on the arguments (Moser iteration) taken here.

\begin{thm}\label{thm Sobolev}
(the Saloff-Coste's Sobolev embedding theorem)
Let $(M,\,g)$ be a complete Riemannian manifold with $Ric\geq -(n-1)\kappa $.
For any $n>2$, there exist a constant $c_n$, depending only on $n$, such that for all $B\subset M$ we have
$$
\left(\int_{B}h^{2\chi }\right)^{\frac{1}{\chi }}\leq e^{c_n\left(1+\sqrt \kappa R\right)}V^{-\frac{2}{n}}R^2\left(\int_{B}\left|\nabla h\right|^2+\int_{B}R^{-2}h^2\right),
\quad h\in C_{0}^{\infty}(B),
$$
where $R$ and $V$ are the radius and volume of $B$, constant $\chi =\frac{n}{n-2}$.
For $n=2$, the above inequality holds with $n$ replaced by any fixed $n'>2$.
\end{thm}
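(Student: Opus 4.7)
The statement is a classical result of Saloff-Coste and the plan is to recover it by combining three ingredients under the Ricci lower bound $Ric\geq -(n-1)\kappa$: volume doubling on geodesic balls, a scale-invariant $L^2$-Neumann-Poincar\'e inequality on the same balls, and the general principle that doubling together with Poincar\'e forces a local Sobolev embedding of exponent $2\chi=2n/(n-2)$.

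The first two ingredients come essentially for free from comparison geometry. Bishop-Gromov volume comparison against the model space of constant sectional curvature $-\kappa$ yields, for $0<r\leq R$,
$$\frac{V(B(x,R))}{V(B(x,r))}\leq C(n)\,e^{c_n\sqrt{\kappa}R},$$
which is the doubling property with precisely the $e^{c_n\sqrt{\kappa}R}$ weight that appears in the final inequality. Next I would invoke Buser's inequality (equivalently, the Cheeger-Colding segment inequality) to conclude the local Neumann-Poincar\'e bound
$$\int_{B}|f-f_{B}|^2\leq C(n)\,R^2\,e^{c_n(1+\sqrt{\kappa}R)}\int_{B}|\nabla f|^2,$$
where $f_B$ denotes the mean of $f$ on $B$.

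The third and decisive step is to upgrade doubling plus Poincar\'e to the Sobolev embedding. A clean route is via Nash's inequality: combined with the doubling bound, it produces an ultracontractive estimate for the Neumann heat semigroup on $B$, and a standard duality argument then converts ultracontractivity into the asserted $L^{2\chi}$ inequality for compactly supported $h$. Alternatively one chains the local Poincar\'e inequality along a Whitney cover of $B$ and runs a Moser iteration, which is the route originally taken by Saloff-Coste.

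The main technical obstacle is this last step. The doubling and Poincar\'e inputs are essentially mechanical consequences of $Ric\geq -(n-1)\kappa$, but the passage to a sharp-exponent Sobolev inequality with the correct explicit dependence on $n$, $R$, $V$, and $\sqrt{\kappa}R$ requires careful constant-tracking through either the heat-kernel argument or the Whitney chaining; a naive iteration loses the sharp dependence on the volume factor $V^{-2/n}$. The borderline case $n=2$ is finessed by replacing $n$ with an arbitrary fixed $n'>2$ throughout, which circumvents the degeneracy $\chi=n/(n-2)\to\infty$ as $n\to 2^+$.
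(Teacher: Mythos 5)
The paper does not prove this theorem: it is quoted verbatim (up to notation) as Theorem~3.1 of Saloff-Coste's 1992 paper \cite{L. Saloff-Coste1992} and used as a black-box analytic input for the Moser iteration. So there is no in-paper proof to compare your outline against; the relevant comparison is with Saloff-Coste's own argument.

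With that caveat, your sketch is a faithful high-level reconstruction of the standard route. The three ingredients you isolate --- Bishop--Gromov giving doubling with the weight $e^{c_n\sqrt{\kappa}R}$, a Buser-type local Neumann--Poincar\'e inequality, and the general ``doubling $+$ Poincar\'e $\Rightarrow$ Sobolev'' upgrade --- are precisely the structure of Saloff-Coste's proof, and the Whitney-chain / Moser-iteration route you mention in the alternative is the one he actually takes. You are also right that the borderline case $n=2$ is handled simply by running the argument with a fixed $n'>2$, exactly as the statement records. However, as you yourself note, the proposal stops where the real work begins: the implication from doubling and Poincar\'e to the scale-invariant $L^{2\chi}$ inequality with the stated dependence $V^{-2/n}R^{2}e^{c_n(1+\sqrt{\kappa}R)}$ is the entire technical content, and neither the Nash-inequality/ultracontractivity route nor the Whitney-chaining route is carried out. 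As it stands this is a correct program and a correct identification of the hard step, not a proof. For the purposes of this paper that is acceptable --- the authors themselves import the theorem without proof --- but if you intend to supply a self-contained argument you would need to actually execute the chaining-plus-iteration step and track constants through it, which is a nontrivial addition rather than a routine gap-fill.
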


\section{Proof of main results}
In this section we first provide the proof of Theorem \ref{thm1} and need to discuss two cases, i.e., the case $n\geq 3$ and the case $n=2$. After that, we will give the proof of Corollary \ref{cor1}.

\subsection{The case $n\geq 3$}
We first focus on the proof of Theorem \ref{thm1} in the case $n\geq3$.
Throughout this subsection, unless otherwise mentioned, $n\ge3$, $r$ and $s$ are two real constants which satisfy that $r\leq s$ and
$$1<r<\frac{n+3}{n-1}\quad\quad ~~\mbox{or}~~\quad 1<s<\frac{n+3}{n-1}.$$

\begin{lem}\label{lem3.1} Let $v$ be a positive solution to \eqref{eqno1.1}, $u=-\ln v$ and $h=|\nabla u|^2$ as before.
Then, there exists $\iota _0=c_{n,r,s}(1+\sqrt{\kappa}R)$, where $c_{n,r,s}=\max\{c_n,\,2\tilde\alpha,\,\frac{16}{\tilde\rho}\}$ is a positive constant depending on $n$, $\tilde\alpha$ and $\tilde\rho$ which is defined as in the above section, such that for any $0\leq\eta\in C_{0}^{\infty}(B_{R})$ and any $\iota \geq\iota _0$ large enough there holds true
\begin{equation*}
\begin{aligned}
&e^{-\iota _0}V^{\frac{2}{n}}\left(\int_{B_{R}}h^{(\iota +1)\chi }\eta^{2\chi }\right)^{\frac{1}{\chi }}+
4\iota \tilde\rho R^2\int_{B_{R}}h^{\iota +2}\eta^2\\
\leq&66R^2\int_{B_{R}}h^{\iota +1}\left|\nabla\eta\right|^2+\iota _0^2\iota  \int_{B_{R}} h^{\iota +1}\eta^2 .
\end{aligned}
\end{equation*}
Here $B_R$ is a geodesic Ball in $(M, g)$ and $V$ is the volume of $B_R$.
\end{lem}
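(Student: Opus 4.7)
The plan is to run the standard Nash-Moser energy scheme starting from the pointwise inequality (\ref{eqno2.3}) of Lemma~\ref{lem2.1}. First, I multiply both sides of (\ref{eqno2.3}) by $\tilde\alpha h^{\tilde\alpha-1}\cdot h^{\iota+1-\tilde\alpha}\eta^2$ and integrate over $B_R$; note the exponent $\iota+1-\tilde\alpha$ is non-negative because $\iota\geq\iota_0\geq 2\tilde\alpha$. Integration by parts on the left transforms $\int h^{\iota+1-\tilde\alpha}\eta^2\Delta(h^{\tilde\alpha})$ into a gradient term plus a cross term; after dividing by $\tilde\alpha$ one obtains
\begin{equation*}
(\iota+1-\tilde\alpha)K + \tilde\rho J \;\leq\; 2\int_{B_R} h^{\iota}\eta\,|\nabla h|\,|\nabla\eta| + \tfrac{2(n-2)}{n-1}\int_{B_R} h^{\iota+1/2}|\nabla h|\eta^2 + 2(n-1)\kappa I,
\end{equation*}
where $K=\int h^{\iota-1}|\nabla h|^2\eta^2$, $J=\int h^{\iota+2}\eta^2$, $I=\int h^{\iota+1}\eta^2$, and I have used $|\nabla u|=\sqrt{h}$ to control the directional-derivative term.

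Next I apply Young's inequality to the two cross terms, bounding the first by $aK+a^{-1}L$ (with $L=\int h^{\iota+1}|\nabla\eta|^2$) and the second by $bK+\tfrac{(n-2)^2}{(n-1)^2 b}J$. I choose $b$ of order $1/\tilde\rho$ so that $\tfrac{(n-2)^2}{(n-1)^2 b}\leq\tilde\rho/2$, and $a$ of order $\iota+1-\tilde\alpha$ so that $a+b\leq(\iota+1-\tilde\alpha)/2$; the inclusions $16/\tilde\rho\leq\iota_0\leq\iota$ and $2\tilde\alpha\leq\iota$ built into $c_{n,r,s}=\max\{c_n,2\tilde\alpha,16/\tilde\rho\}$ make both choices simultaneously consistent. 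After absorption,
\begin{equation*}
\tfrac{\iota+1-\tilde\alpha}{2}\,K + \tfrac{\tilde\rho}{2}\,J \;\leq\; \tfrac{C_0}{\iota+1-\tilde\alpha}\,L + 2(n-1)\kappa I,
\end{equation*}
with $C_0=C_0(n,r,s)$.

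For the Sobolev preparation I set $\phi=h^{(\iota+1)/2}\eta$ and note that $|\nabla\phi|^2\leq \tfrac{(\iota+1)^2}{2}h^{\iota-1}|\nabla h|^2\eta^2+2h^{\iota+1}|\nabla\eta|^2$, so $\int|\nabla\phi|^2\leq\tfrac{(\iota+1)^2}{2}K+2L$. Plugging in the bound for $K$ from the previous step exploits the ratio $(\iota+1)^2/(\iota+1-\tilde\alpha)^2\leq 4$, valid because $\iota\geq 2\tilde\alpha$, and produces
\begin{equation*}
\int_{B_R}|\nabla\phi|^2 \;\leq\; C_1 L + C_1(\iota+1)\kappa I,
\end{equation*}
with $C_1=C_1(n,r,s)$ \emph{independent of} $\iota$. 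Applying Theorem \ref{thm Sobolev} to $\phi$ and using $\iota_0\geq c_n(1+\sqrt\kappa R)$ to upgrade the factor $e^{-c_n(1+\sqrt\kappa R)}$ to $e^{-\iota_0}$, I obtain
\begin{equation*}
e^{-\iota_0}V^{2/n}\left(\int_{B_R} h^{(\iota+1)\chi}\eta^{2\chi}\right)^{1/\chi} \;\leq\; C_1 R^2 L + C_1 R^2(\iota+1)\kappa I + I.
\end{equation*}

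Finally, to manufacture the $4\iota\tilde\rho R^2 J$ summand appearing on the left of the claim, I multiply the step-two inequality by $8\iota R^2$ (the factor $\iota\tilde\rho/(\iota+1-\tilde\alpha)$ remains bounded for $\iota\geq 2\tilde\alpha$) and add the result to the Sobolev estimate above. The remaining term $I$ and all $R^2\kappa$ prefactors are then consolidated into $\iota_0^2\iota I$ via $R^2\kappa\leq(1+\sqrt\kappa R)^2=(\iota_0/c_{n,r,s})^2$ and $1\leq\iota_0^2\iota$, with $c_{n,r,s}$ enlarged if necessary so that the total coefficient on $R^2 L$ is at most $66$ and the total coefficient on $\iota_0^2\iota I$ is at most $1$. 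The main difficulty is the joint choice of the Young weights $a$ and $b$: one needs $L$ to come out with a universal coefficient (no $\iota$ dependence) while the coefficient on $K$ remains of order $\iota$, so that the Sobolev-induced ratio $(\iota+1)^2/(\iota+1-\tilde\alpha)$ remains $O(1)$; this is precisely where the quantitative lower bounds $\iota\geq 2\tilde\alpha$ and $\iota\geq 16/\tilde\rho$ are essential.
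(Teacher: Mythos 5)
Your proposal follows the same overall road map as the paper: (i) integrate the pointwise estimate of Lemma~\ref{lem2.1} against a power of $h$ times $\eta^2$ and integrate by parts, (ii) absorb the two cross terms by Young's inequality with weights chosen so that the lower bounds $\iota\geq 2\tilde\alpha$ and $\iota\geq 16/\tilde\rho$ make everything consistent, (iii) bound $\int|\nabla(h^{(\iota+1)/2}\eta)|^2$ and apply the Saloff--Coste Sobolev inequality, (iv) rearrange. The main intermediate inequality, the key ratio $(\iota+1)^2/(\iota+1-\tilde\alpha)^2\leq 4$, and the absorption of $\kappa R^2$ into $\iota_0^2$ via $\kappa R^2\leq(\iota_0/c_{n,r,s})^2$ all match the paper.

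There is, however, a genuine error in the final assembly step. You drop the $J=\int h^{\iota+2}\eta^2$ term from the Sobolev preparation, then re-introduce it by adding $8\iota R^2$ times your Step-2 inequality. That multiplication also brings in $8\iota R^2\cdot\tfrac{C_0}{\iota+1-\tilde\alpha}L\leq 16 C_0\,R^2 L$ (with $C_0\approx 4$, so $\approx 64 R^2 L$), which, added to the $C_1 R^2 L$ already present from the Sobolev estimate (with $C_1\approx 18$), gives a coefficient on $R^2 L$ of roughly $82>66$. You then claim this can be reduced to $66$ by ``enlarging $c_{n,r,s}$,'' but enlarging $c_{n,r,s}$ only increases $\iota_0$; the coefficient on $R^2 L$ is \emph{independent} of $c_{n,r,s}$ and $\iota_0$, so this move cannot shrink it. (It does legitimately control the $\iota_0^2\iota I$ coefficient, which is the other thing you use it for.) The paper avoids this by never dropping the $J$ term: it keeps $-\tfrac{\tilde\rho}{2}J$ inside the bracket when substituting the bound on $K$ into $\int|\nabla\phi|^2$, so the Sobolev estimate already carries $-4\iota\tilde\rho R^2 J$ on its right side, and $66 = 64+2$ emerges directly. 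The fix for your version is one line: either keep $J$ through the Sobolev step (as the paper does), or multiply Step~2 by only $4\iota R^2$ and combine it with the $\approx 2\iota\tilde\rho R^2 J$ already available from the retained bracket; in either case the exact value $66$ is anyway immaterial to the downstream Moser iteration, which only needs some universal constant.

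A smaller remark: the paper is careful to justify the weak formulation near the zero set of $h$ via the truncation $h_\epsilon=(h-\epsilon)^+$ and then lets $\epsilon\to 0$, since Lemma~\ref{lem2.1} is only asserted where $h\neq 0$. Your proposal integrates against $\tilde\alpha h^\iota\eta^2$ directly over $B_R$; this is morally fine because $h^\iota$ vanishes on $\{h=0\}$ when $\iota>0$, but a one-sentence justification (truncation or an equivalent regularization) is needed to make the integration by parts rigorous.
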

\begin{proof}
Let
$$ A=\left\{x\in B_R|h(x)=0\right\},\quad \bar A=B_R\setminus A.$$
Thus, according to the Lemma \ref{lem2.1}, we take integration by part to derive that,
for any function $\varphi\in W_{0}^{1,2}(B_R)$ with $\varphi\geq 0$ and $\mbox{supp}(\varphi)\subset\subset\bar A$,
there holds true:
\begin{equation}\label{3.1}
\int_{B_{R}}\frac{\Delta \left(h^{\tilde\alpha}\right)}{\tilde\alpha h^{{\tilde\alpha}-1}}\varphi
\geq\tilde\rho \int_{B_{R}}h^{2}\varphi-2(n-1)\kappa \int_{B_{R}}h\varphi +\frac{2\left(n-2\right)}{n-1}\int_{B_{R}}\langle\nabla u,\,\nabla h\rangle \varphi,
\end{equation}
where $\tilde\alpha\geq1$ and $\tilde\rho\geq0$ are two suitable positive constants chosen in the proof of Lemma \ref{lem2.1}.

From (\ref{eqno2.5}) we have
$$
\int_{B_{R}}\Delta h\varphi=\int_{B_{R}}\frac{\Delta \left(h^{\tilde\alpha}\right)}{\tilde\alpha h^{\tilde\alpha-1}}\varphi-(\tilde\alpha-1)\int_{B_{R}}h^{-1}|\nabla h|^2\varphi.
$$
Substituting \eqref{3.1} into the above identity leads to
$$
\begin{aligned}
\int_{B_{R}}\Delta h\varphi
\geq&\tilde\rho \int_{B_{R}}h^{2}\varphi-(\tilde\alpha-1)\int_{B_{R}}h^{-1}|\nabla h|^2\varphi\\
&-2(n-1)\kappa \int_{B_{R}}h\varphi +\frac{2\left(n-2\right)}{n-1}\int_{B_{R}}\langle\nabla u,\,\nabla h\rangle \varphi.
\end{aligned}
$$
Hence, it follows
\begin{equation}\label{eqno3.2}
\begin{aligned}
\int_{B_{R}}\langle \nabla h,\, \nabla\varphi \rangle\
\leq&-\tilde\rho \int_{B_{R}}h^{2}\varphi+(\tilde\alpha-1)\int_{B_{R}}h^{-1}|\nabla h|^2\varphi\\
&+2(n-1)\kappa \int_{B_{R}}h\varphi -\frac{2\left(n-2\right)}{n-1}\int_{B_{R}}\langle\nabla u,\,\nabla h\rangle \varphi.
\end{aligned}
\end{equation}

Now, for any $\epsilon>0$ we define
$$h_\epsilon =\left(h-\epsilon\right)^+.$$
Let $\varphi=\eta^2 h_\epsilon^\iota \in W_{0}^{1,2}(B_R)$ where $0\leq\eta\in C_{0}^{\infty}(B_{R})$ and $\iota >\max\{1,\,2(\tilde\alpha-1)\}$ will be determined later.
Direct computation shows that
$$
\nabla \varphi= 2h_\epsilon^\iota \eta\nabla\eta+\iota  h_\epsilon^{\iota -1}\eta^2\nabla h.
$$
By substituting the above into (\ref{eqno3.2}), we derive
$$
\begin{aligned}
&\int_{B_{R}} \langle\nabla h,\,2h_\epsilon^\iota \eta\nabla\eta+\iota  h_\epsilon^{\iota -1}\eta^2\nabla h\rangle\\
\leq& -\tilde\rho \int_{B_{R}}h^2\eta^2 h_\epsilon^\iota+(\tilde\alpha-1)\int_{B_{R}}h^{-1}|\nabla h|^2\eta^2 h_\epsilon^\iota \\
&+2(n-1)\kappa\int_{B_{R}}h\eta^2 h_\epsilon^\iota  -\frac{2\left(n-2\right)}{n-1}\int_{B_{R}}\langle\nabla u,\,\nabla h\rangle  \eta^2 h_\epsilon^\iota ,
\end{aligned}
$$
it follows that
$$
\begin{aligned}
&2\int_{B_{R}} h_\epsilon^\iota   \eta\langle\nabla h,\,\nabla\eta\rangle+\iota \int_{B_{R}} h_\epsilon^{\iota -1} |\nabla h|^2\eta^2+\tilde\rho \int_{B_{R}}h^{2}\eta^2 h_\epsilon^\iota \\
\leq&(\tilde\alpha-1)\int_{B_{R}}h^{-1}|\nabla h|^2\eta^2 h_\epsilon^\iota+2(n-1)\kappa\int_{B_{R}}h\eta^2 h_\epsilon^\iota  -\frac{2\left(n-2\right)}{n-1}\int_{B_{R}}\langle\nabla u,\,\nabla h\rangle  \eta^2 h_\epsilon^\iota .
\end{aligned}
$$
Hence
$$
\begin{aligned}
&-2\int_{B_{R}} h_\epsilon^\iota   \eta|\nabla h||\nabla\eta|+\iota \int_{B_{R}} h_\epsilon^{\iota -1} |\nabla h|^2\eta^2+\tilde\rho \int_{B_{R}}h^{2}\eta^2 h_\epsilon^\iota\\
\leq&(\tilde\alpha-1)\int_{B_{R}}h^{-1}|\nabla h|^2\eta^2 h_\epsilon^\iota+ 2(n-1)\kappa\int_{B_{R}}h\eta^2 h_\epsilon^\iota  +\frac{2\left(n-2\right)}{n-1}\int_{B_{R}}|\nabla h| h^{\frac{1}{2}}\eta^2 h_\epsilon^\iota .
\end{aligned}
$$
By rearranging the above inequality, we have
$$
\begin{aligned}
\iota \int_{B_{R}} h_\epsilon^{\iota -1} |\nabla h|^2\eta^2+\tilde\rho \int_{B_{R}}h^{2}\eta^2 h_\epsilon^\iota
\leq& 2\int_{B_{R}} h_\epsilon^\iota   \eta|\nabla h||\nabla\eta|+(\tilde\alpha-1)\int_{B_{R}}h^{-1}|\nabla h|^2\eta^2 h_\epsilon^\iota\\
& +2(n-1)\kappa\int_{B_{R}}h\eta^2 h_\epsilon^\iota +\frac{2\left(n-2\right)}{n-1}\int_{B_{R}}|\nabla h| h^{\frac{1}{2}}\eta^2 h_\epsilon^\iota \\
\leq& 2\int_{B_{R}} h^{\iota }\eta|\nabla h||\nabla\eta|+(\tilde\alpha-1)\int_{B_{R}}h^{\iota -1}|\nabla h|^2\eta^2\\
&+2(n-1)\kappa\int_{B_{R}} h^{\iota +1}\eta^2 +\frac{2\left(n-2\right)}{n-1}\int_{B_{R}}|\nabla h| h^{\iota +\frac{1}{2}}\eta^2.
\end{aligned}
$$
By passing $\epsilon$ to $0$ we obtain
$$
\begin{aligned}
\iota \int_{B_{R}} h^{\iota -1}|\nabla h|^2\eta^2+\tilde\rho \int_{B_{R}}h^{\iota +2}\eta^2
\leq& 2\int_{B_{R}} h^{\iota }\eta|\nabla h||\nabla\eta|+(\tilde\alpha-1)\int_{B_{R}}h^{\iota -1}|\nabla h|^2\eta^2\\
&+2(n-1)\kappa\int_{B_{R}} h^{\iota +1}\eta^2 +\frac{2\left(n-2\right)}{n-1}\int_{B_{R}}|\nabla h| h^{\iota +\frac{1}{2}}\eta^2,
\end{aligned}
$$
then, by rearranging the above we have
$$
\begin{aligned}
&(\iota +1-\tilde\alpha)\int_{B_{R}} h^{\iota -1}|\nabla h|^2\eta^2+\tilde\rho \int_{B_{R}}h^{\iota +2}\eta^2\\
\leq&2\int_{B_{R}} h^{\iota }\eta|\nabla h||\nabla\eta|+2(n-1)\kappa\int_{B_{R}} h^{\iota +1}\eta^2 +\frac{2\left(n-2\right)}{n-1}\int_{B_{R}}|\nabla h| h^{\iota +\frac{1}{2}}\eta^2.
\end{aligned}
$$
Furthermore, by the choice of $\iota $ we know
\begin{equation}\label{eqno3.3}
\begin{aligned}
&\frac{\iota }{2}\int_{B_{R}} h^{\iota -1}|\nabla h|^2\eta^2+\tilde\rho \int_{B_{R}}h^{\iota +2}\eta^2\\
\leq& 2\int_{B_{R}} h^{\iota }\eta|\nabla h||\nabla\eta|+2(n-1)\kappa\int_{B_{R}} h^{\iota +1}\eta^2 +\frac{2\left(n-2\right)}{n-1}\int_{B_{R}}|\nabla h| h^{\iota +\frac{1}{2}}\eta^2.
\end{aligned}
\end{equation}

On the other hand, by Young's inequality we can derive
$$
\begin{aligned}
2\int_{B_{R}} h^{\iota }\eta|\nabla h||\nabla\eta|
=&2\int_{B_{R}}h^{\frac{\iota -1}{2}}\eta\left|\nabla h\right|\times h^{\frac{\iota +1}{2}}\left|\nabla\eta\right|\\
\leq&2\int_{B_{R}}\left[\frac{\iota }{8}\frac{h^{\iota -1}\eta^2\left|\nabla h\right|^2}{2} + \frac{8}{\iota } \frac{h^{\iota +1}\left|\nabla\eta\right|^2}{2}\right]\\
\leq&\frac{\iota }{8}\int_{B_{R}}h^{\iota -1}\eta^2\left|\nabla h\right|^2+\frac{8}{\iota }\int_{B_{R}}h^{\iota +1}\left|\nabla\eta\right|^2,
\end{aligned}
$$
and
$$
\begin{aligned}
\frac{2\left(n-2\right)}{n-1}\int_{B_{R}}|\nabla h| h^{\iota +\frac{1}{2}}\eta^2
\leq&2\int_{B_{R}}h^{\iota +\frac{1}{2}}\eta^2\left|\nabla h\right|\\
=&2\int_{B_{R}}h^{\frac{\iota -1}{2}}\eta\left|\nabla h\right|\times h^{\frac{\iota +2}{2}}\eta\\
\leq&2\int_{B_{R}}\left[\frac{\iota }{8}\frac{h^{\iota -1}\eta^2\left|\nabla h\right|^2}{2} + \frac{8}{\iota } \frac{h^{\iota +2}\eta^2}{2}\right]\\
\leq&\frac{\iota }{8}\int_{B_{R}}h^{\iota -1}\eta^2\left|\nabla h\right|^2+\frac{8}{\iota }\int_{B_{R}}h^{\iota +2}\eta^2.
\end{aligned}
$$

Now, by picking $\iota $ such that
$$
\iota \geq\max\{\frac{16}{\tilde\rho },\,2\tilde\alpha\}>\max\{1,\,2(\tilde\alpha-1)\})\quad\mbox{and}\quad\frac{8}{\iota }\leq\frac{\tilde\rho }{2},
$$
we can see easily that (\ref{eqno3.3}) can be rewritten as
\begin{equation}\label{eqno3.4}
\frac{\iota }{4}\int_{B_{R}} h^{\iota -1}|\nabla h|^2\eta^2+\frac{\tilde\rho }{2}\int_{B_{R}}h^{\iota +2}\eta^2
\leq 2(n-1)\kappa\int_{B_{R}} h^{\iota +1}\eta^2 +\frac{8}{\iota }\int_{B_{R}}h^{\iota +1}\left|\nabla\eta\right|^2.
\end{equation}

Besides, we have
$$
\begin{aligned}
\left|\nabla\left(h^{\frac{\iota +1 }{2}}\eta\right)\right|^2
=&\left|\eta\nabla h^{\frac{\iota +1}{2}}+h^{\frac{\iota +1}{2}}\nabla\eta\right|^2\\
\leq&2\eta^2\left|\nabla h^{\frac{\iota +1}{2}}\right|^2+2h^{\iota +1}\left|\nabla\eta\right|^2\\
=&\frac{\left(\iota +1\right)^2}{2}h^{\iota -1}\eta^2\left|\nabla h\right|^2+2h^{\iota +1}\left|\nabla\eta\right|^2,
\end{aligned}
$$
and integrate it on $B_{R}$ to obtain
$$
\begin{aligned}
\int_{B_{R}}\left|\nabla\left(h^{\frac{\iota +1 }{2}}\eta\right)\right|^2
\leq &\frac{\left(\iota +1\right)^2}{2}\int_{B_{R}}\eta^2h^{\iota -1 }\left|\nabla h\right|^2+2\int_{B_{R}}h^{\iota +1}\left|\nabla\eta\right|^2\\
\leq&\frac{2\left(\iota +1\right)^2}{\iota }\left[2(n-1)\kappa\int_{B_{R}} h^{\iota +1}\eta^2 +\frac{8}{\iota }\int_{B_{R}}h^{\iota +1}\left|\nabla\eta\right|^2 -\frac{\tilde\rho }{2}\int_{B_{R}}h^{\iota +2}\eta^2\right]\\
& +2\int_{B_{R}}h^{\iota +1}\left|\nabla\eta\right|^2.
\end{aligned}
$$
Noticing that there holds true $$\iota ^2<(\iota +1)^2\leq4\iota ^2,$$
we obtain
$$
\begin{aligned}
\int_{B_{R}}\left|\nabla\left(h^{\frac{\iota +1 }{2}}\eta\right)\right|^2
\leq&8\iota \left[2(n-1)\kappa\int_{B_{R}} h^{\iota +1}\eta^2+ \frac{8}{\iota }\int_{B_{R}}h^{\iota +1}\left|\nabla\eta\right|^2-\frac{\tilde\rho }{2}\int_{B_{R}}h^{\iota +2}\eta^2\right]\\
&+2\int_{B_{R}}h^{\iota +1}\left|\nabla\eta\right|^2\\
\leq&16(n-1)\kappa\iota \int_{B_{R}} h^{\iota +1}\eta^2 +66\int_{B_{R}}h^{\iota +1}\left|\nabla\eta\right|^2-4\iota \tilde\rho \int_{B_{R}}h^{\iota +2}\eta^2.
\end{aligned}
$$
According to the Theorem \ref{thm Sobolev}, we deduce from the above inequality
$$
\begin{aligned}
\left(\int_{B_{R}}h^{(\iota +1)\chi }\eta^{2\chi }\right)^{\frac{1}{\chi }}
\leq& e^{c_n\left(1+\sqrt \kappa R\right)}V^{-\frac{2}{n}}R^2\left[16(n-1)\kappa\iota \int_{B_{R}} h^{\iota +1}\eta^2 +66\int_{B_{R}}h^{\iota +1}\left|\nabla\eta\right|^2\right.\\
&\left.-4\iota \tilde\rho \int_{B_{R}}h^{\iota +2}\eta^2+R^{-2}\int_{B_{R}}h^{\iota +1}\eta^2\right]\\
=& e^{c_n\left(1+\sqrt \kappa R\right)}V^{-\frac{2}{n}}\left[\left(16(n-1)\kappa\iota  R^2+1\right)\int_{B_{R}} h^{\iota +1}\eta^2 \right.\\
&\left.+66R^2\int_{B_{R}}h^{\iota +1}\left|\nabla\eta\right|^2-4\iota \tilde\rho R^2\int_{B_{R}}h^{\iota +2}\eta^2\right],
\end{aligned}
$$
where $V=~\mbox{Vol}~(B_{R})$ and $\chi =\frac{n}{n-2}$. Rearranging the above inequality leads to the following
\begin{equation}\label{eqno3.5}
\begin{aligned}
&e^{-c_n\left(1+\sqrt \kappa R\right)}V^{\frac{2}{n}}\left(\int_{B_{R}}h^{(\iota +1)\chi }\eta^{2\chi }\right)^{\frac{1}{\chi }}
+4\iota \tilde\rho R^2\int_{B_{R}}h^{\iota +2}\eta^2\\
\leq&16(n-1)\iota  \left(\kappa R^2+1\right)\int_{B_{R}} h^{\iota +1}\eta^2 +66R^2\int_{B_{R}}h^{\iota +1}\left|\nabla\eta\right|^2.
\end{aligned}
\end{equation}

Now we choose
$$
\iota _0=c_{n,r,s}\left(1+\sqrt\kappa R\right),
$$
where $$c_{n,r,s}=\max\{c_n,\,2\tilde\alpha,\,\frac{16}{\tilde\rho }\}.
$$
Then, we can infer from \eqref{eqno3.5} that for any $\iota \geq\max\{\frac{16}{\tilde\rho},\, 2\tilde\alpha\}$ there holds
\begin{equation}\label{eqno3.6}
\begin{aligned}
&e^{-\iota _0}V^{\frac{2}{n}}\left(\int_{B_{R}}h^{(\iota +1)\chi }\eta^{2\chi }\right)^{\frac{1}{\chi }}+
4\iota \tilde\rho R^2\int_{B_{R}}h^{\iota +2}\eta^2\\
\leq&16(n-1)\iota  \left(\kappa R^2+1\right)\int_{B_{R}} h^{\iota +1}\eta^2 +66R^2\int_{B_{R}}h^{\iota +1}\left|\nabla\eta\right|^2.
\end{aligned}
\end{equation}
By the definition of $\tilde\rho $ in the previous section, it is easy to see that
$$
\frac{16}{\tilde\rho }\geq8(n-1)\geq 8
$$
and
$$
16(n-1)\left(\kappa R^2+1\right)\leq\left[c_{n,r,s}\left(1+\sqrt\kappa R\right)\right]^2=\iota _0^2.
$$
We can see that (\ref{eqno3.6}) can be rewritten as
\begin{equation}\label{eqno3.7}
\begin{aligned}
&e^{-\iota _0}V^{\frac{2}{n}}\left(\int_{B_{R}}h^{(\iota +1)\chi }\eta^{2\chi }\right)^{\frac{1}{\chi }}+
4\iota \tilde\rho R^2\int_{B_{R}}h^{\iota +2}\eta^2\\
\leq&\iota _0^2\iota  \int_{B_{R}} h^{\iota +1}\eta^2 +66R^2\int_{B_{R}}h^{\iota +1}\left|\nabla\eta\right|^2,
\end{aligned}
\end{equation}
where $\iota \geq\max\{\frac{16}{\tilde\rho },\,2\tilde\alpha\}$. We complete the proof of Lemma \ref{lem3.1}. \end{proof}
Using the above inequality we will infer a local estimate of $h$ stated in the following lemma, which will play a key role in the proofs of the main theorems.
\begin{lem}\label{lem3.2}
Let $\iota _1=(\iota _0+1)\chi $. Then there exist a universal constant $c>0$ such that the following estimate of $\|h\|_{L^{\iota _1}\left(B_{3R/4}\right)}$ holds
\begin{equation}\label{eqno3.8}
\begin{aligned}
\|h\|_{L^{\iota _1}\left(B_{3R/4}\right)}
\leq& \frac{c\iota _0^2}{\tilde\rho R^2}V^{\frac{1}{\iota _1}},
\end{aligned}
\end{equation}
where $c$ is a universal constant.
\end{lem}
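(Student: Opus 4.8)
The plan is to run a single step of the Nash--Moser iteration on the inequality of Lemma~\ref{lem3.1}. Apply that lemma with the exponent $\iota=\iota_0$ --- this is admissible, since $\iota_0=c_{n,r,s}(1+\sqrt\kappa R)\ge c_{n,r,s}=\max\{c_n,\,2\tilde\alpha,\,16/\tilde\rho\}\ge\max\{16/\tilde\rho,\,2\tilde\alpha\}$, and it is the right choice because it makes $(\iota+1)\chi=\iota_1$, so the left side of the inequality already carries $e^{-\iota_0}V^{2/n}\bigl(\int_{B_R}h^{\iota_1}\eta^{2\chi}\bigr)^{1/\chi}$ together with the favourable term $4\iota_0\tilde\rho R^2\int_{B_R}h^{\iota_0+2}\eta^2$. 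For the cutoff I would not use a plain Lipschitz function but rather $\eta=\psi^m$, where $\psi$ is a smooth cutoff equal to $1$ on $B_{3R/4}$, supported in $B_R$, with $|\nabla\psi|\le C_0/R$ for a universal $C_0$, and $m=\lceil\iota_0\rceil+2$ is an integer comparable to $\iota_0$; the role of the power is explained below.

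The next step is to control the two terms on the right of Lemma~\ref{lem3.1} by (a multiple of) $\int_{B_R}h^{\iota_0+2}\eta^2$ plus a pure constant times $V$. For the zeroth-order term this is immediate from the elementary inequality $t^{\iota_0+1}\le\epsilon\,t^{\iota_0+2}+\epsilon^{-(\iota_0+1)}$ (all $t\ge0$, $\epsilon>0$), applied with $\epsilon=\tilde\rho R^2/\iota_0^2$:
\[
\iota_0^{3}\int_{B_R}h^{\iota_0+1}\eta^2\ \le\ \iota_0\tilde\rho R^2\int_{B_R}h^{\iota_0+2}\eta^2+\iota_0^{3}\Bigl(\frac{\iota_0^2}{\tilde\rho R^2}\Bigr)^{\iota_0+1}V .
\]
The gradient term $66R^2\int_{B_R}h^{\iota_0+1}|\nabla\eta|^2$ is the delicate one, and I expect it to be the main obstacle: $|\nabla\eta|$ is supported on $B_R\setminus B_{3R/4}$, where $\eta$ is small, so a careless split of it leaves a non-absorbable copy of $\int_{B_R}h^{\iota_0+2}$. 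Instead I would factor $h^{\iota_0+1}|\nabla\eta|^2=\bigl(h^{\iota_0+1}\eta^{2(\iota_0+1)/(\iota_0+2)}\bigr)\bigl(\eta^{-2(\iota_0+1)/(\iota_0+2)}|\nabla\eta|^2\bigr)$ and apply Young's inequality with exponents $\tfrac{\iota_0+2}{\iota_0+1}$ and $\iota_0+2$: the first factor raised to its conjugate exponent is exactly $h^{\iota_0+2}\eta^2$, while the second raised to the power $\iota_0+2$ equals $\eta^{-2(\iota_0+1)}|\nabla\eta|^{2(\iota_0+2)}$ --- and here the choice $\eta=\psi^m$ with $m\ge\iota_0+2$ makes the exponent $2m-2\iota_0-4$ of $\psi$ in that expression nonnegative, so it is pointwise bounded by $(mC_0/R)^{2(\iota_0+2)}$. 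Tuning the Young parameter so the $h^{\iota_0+2}\eta^2$ contribution is again at most $\iota_0\tilde\rho R^2\int_{B_R}h^{\iota_0+2}\eta^2$, the gradient term yields an extra constant times $V$ of size $\lesssim 66^{\iota_0+2}(mC_0)^{2\iota_0+4}(\iota_0\tilde\rho)^{-(\iota_0+1)}R^{-(2\iota_0+2)}$.

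Plugging both estimates into Lemma~\ref{lem3.1}, the two copies of $\iota_0\tilde\rho R^2\int h^{\iota_0+2}\eta^2$ make up half of $4\iota_0\tilde\rho R^2\int h^{\iota_0+2}\eta^2$ and get absorbed; discarding the remaining nonnegative $h^{\iota_0+2}$ integral on the left and using $\eta\equiv1$ on $B_{3R/4}$ gives
\[
e^{-\iota_0}V^{2/n}\Bigl(\int_{B_{3R/4}}h^{\iota_1}\Bigr)^{1/\chi}\ \le\ D\,V,\qquad D:=\iota_0^{3}\Bigl(\frac{\iota_0^2}{\tilde\rho R^2}\Bigr)^{\iota_0+1}+\frac{66^{\iota_0+2}(mC_0)^{2\iota_0+4}}{(\iota_0\tilde\rho)^{\iota_0+1}R^{2\iota_0+2}} .
\]
Because $\chi(1-\tfrac2n)=1$, raising this to the power $\chi$ gives $\int_{B_{3R/4}}h^{\iota_1}\le e^{\iota_0\chi}D^{\chi}V$, and the $\iota_1$-th root (with $\iota_1=(\iota_0+1)\chi$) yields $\|h\|_{L^{\iota_1}(B_{3R/4})}\le e^{\iota_0/(\iota_0+1)}D^{1/(\iota_0+1)}V^{1/\iota_1}$. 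It then remains to simplify: $e^{\iota_0/(\iota_0+1)}<e$, $m\le\iota_0+3$, and, since $\iota_0\ge\max\{16/\tilde\rho,2\tilde\alpha\}\ge8(n-1)$ is bounded below by a dimensional constant while the exponents $3/(\iota_0+1)$ and $(2\iota_0+4)/(\iota_0+1)$ stay bounded, one finds $D^{1/(\iota_0+1)}\le c\iota_0^{2}/(\tilde\rho R^2)$ after applying the subadditive map $t\mapsto t^{1/(\iota_0+1)}$ to each summand of $D$; this is precisely \eqref{eqno3.8}. The only genuinely delicate ingredient is the gradient term: choosing $\eta$ to be a high power of a Lipschitz cutoff is exactly what keeps $\eta^{-2(\iota_0+1)/(\iota_0+2)}|\nabla\eta|^2$ bounded, so that Young's inequality never produces a non-absorbable term; the remaining manipulations are routine bookkeeping of powers of $\iota_0$, $m$, $C_0$ that collapse to a universal constant after the final root.
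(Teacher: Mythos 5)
Your proposal is correct and follows essentially the same path as the paper's own proof: take $\iota=\iota_0$ in Lemma~\ref{lem3.1}, use a cutoff that is a roughly $\iota_0$-th power of a Lipschitz bump, absorb the two $h^{\iota_0+1}$ terms into the favourable $h^{\iota_0+2}$ integral plus a constant times $V$, and take the $\iota_1$-th root. The only cosmetic differences are that the paper handles the zeroth-order term by a pointwise case split on the size of $h$ (equivalent to your elementary inequality $t^{\iota_0+1}\le\epsilon t^{\iota_0+2}+\epsilon^{-(\iota_0+1)}$) and handles the gradient term by H\"older followed by Young rather than a single pointwise Young's inequality.
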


\begin{proof}
Since the inequality (\ref{eqno3.7}) holds true for any $\iota \geq\iota _0$, now, by letting $\iota =\iota _0$ in (\ref{eqno3.7}), we can derive
\begin{equation}\label{eqno3.9}
\begin{aligned}
&e^{-\iota _0}V^{\frac{2}{n}}\left(\int_{B_{R}}h^{(\iota _0+1)\chi }\eta^{2\chi }\right)^{\frac{1}{\chi }}+
4\iota _0\tilde\rho R^2\int_{B_{R}}h^{\iota _0+2}\eta^2\\
\leq&\iota _0^3 \int_{B_{R}} h^{\iota _0+1}\eta^2 +66R^2\int_{B_{R}}h^{\iota _0+1}\left|\nabla\eta\right|^2.
\end{aligned}
\end{equation}
For simplicity, we denote the first term on the $RHS$ of (\ref{eqno3.9}) by $R_{1}$ ($R_2,\,L_1,\,L_2$ are understood similarly).
Now, we focus on the $R_1$. Note that if
$$h\geq\frac{\iota _0^2}{2\tilde\rho R^2},$$
then
$$R_1\leq 2\iota _0\tilde\rho R^2\int_{B_{R}}h^{\iota _0+2}\eta^2=\frac{L_2}{2};$$
and if
$$h<\frac{\iota _0^2}{2\tilde\rho R^2},$$
then
$$R_1<V\iota _0^3\left(\frac{\iota _0^2}{\tilde\rho R^2}\right)^{\iota _0+1}.$$
Therefore,
\begin{equation}\label{eqno3.10}
R_1\leq\frac{L_2}{2}+V\iota _0^3\left(\frac{\iota _0^2}{\tilde\rho R^2}\right)^{\iota _0+1}.
\end{equation}

Next, we need to calculate the term $R_2$ by choosing some special $\eta$. Choose $\eta_0\in C_0^{\infty}\left(B_R\right)$ such that
$$
\begin{cases}
0\leq\eta_0\leq1,&\mbox{on}~B_{R},\\
\eta_0=1,&\mbox{on}~B_{3R/4},\\
\left|\nabla\eta_0\right|\leq\frac{8}{R}. &
\end{cases}
$$
Let $\eta=\eta_0^{\iota _0+2}$, then, direct computation yields
$$
\begin{aligned}
R^2\left|\nabla\eta\right|^2
=&R^2(\iota _0+2)^2\eta_0^{2(\iota _0+1)}|\nabla\eta_0|^2\\
\leq& 4\iota _0^2\eta_0^{2(\iota _0+1)}\times8^2\\
=&256\iota _0^2\eta^{\frac{2(\iota _0+1)}{\iota _0+2}}.
\end{aligned}
$$
It means that one can find a universal constant $c$, which is independent of any parameter, such that
$$
R_2\leq c\iota _0^2\int_{B_{R}}h^{\iota _0+1}\eta^{\frac{2(\iota _0+1)}{\iota _0+2}}.
$$
By H\"{o}lder inequality, we have
$$
\begin{aligned}
c\iota _0^2\int_{B_{R}}h^{\iota _0+1}\eta^{\frac{2(\iota _0+1)}{\iota _0+2}}
\leq& c\iota _0^2\left( \int_{B_{R}}h^{\iota _0+2}\eta^2\right)^{\frac{\iota _0+1}{\iota _0+2}}\left(\int_{B_{R}}1\right)^{\frac{1}{\iota _0+2}}\\
=& c\iota _0^2\left( \int_{B_{R}}h^{\iota _0+2}\eta^2\right)^{\frac{\iota _0+1}{\iota _0+2}}V^{\frac{1}{\iota _0+2}}.
\end{aligned}
$$
Furthermore, for any $t>0$, we use Young's inequality to obtain
$$
\begin{aligned}
&c\iota _0^2\left( \int_{B_{R}}h^{\iota _0+2}\eta^2\right)^{\frac{\iota _0+1}{\iota _0+2}}V^{\frac{1}{\iota _0+2}}\\
=&\left( \int_{B_{R}}h^{\iota _0+2}\eta^2\right)^{\frac{\iota _0+1}{\iota _0+2}}t\times\frac{c\iota _0^2}{t}V^{\frac{1}{\iota _0+2}}\\
\leq&\frac{\iota _0+1}{\iota _0+2}\left[\left( \int_{B_{R}}h^{\iota _0+2}\eta^2\right)^{\frac{\iota _0+1}{\iota _0+2}}t\right]^\frac{\iota _0+2}{\iota _0+1}
+\frac{1}{\iota _0+2}\left(\frac{ c\iota _0^2}{t}V^{\frac{1}{\iota _0+2}}\right)^{\iota _0+2}\\
=&\frac{\iota _0+1}{\iota _0+2}t^\frac{\iota _0+2}{\iota _0+1} \int_{B_{R}}h^{\iota _0+2}\eta^2
+\frac{1}{\iota _0+2}t^{-(\iota _0+2)}\left(c\iota _0^2\right)^{\iota _0+2}V.
\end{aligned}
$$
Letting
$$
t=\left[\frac{2(\iota _0+2)\iota _0\tilde\rho R^2}{(\iota +1)}\right]^{\frac{\iota _0+1}{\iota _0+2}},
$$
we can see that
$$
\frac{\iota _0+1}{\iota _0+2}t^\frac{\iota _0+2}{\iota _0+1} =2\iota _0\tilde\rho R^2
$$
and
$$
\frac{1}{\iota _0+2}t^{-(\iota _0+2)}
=\frac{1}{\iota _0+2}\left[\frac{(\iota _0+1)}{2(\iota _0+2)\iota _0\tilde\rho R^2}\right]^{\iota _0+1}\leq\left(\frac{1}{\iota _0\tilde\rho R^2}\right)^{\iota _0+1}.
$$
Immediately, it follows
$$
\begin{aligned}
&c\iota _0^2\left( \int_{B_{R}}h^{\iota _0+2}\eta^2\right)^{\frac{\iota _0+1}{\iota _0+2}}V^{\frac{1}{\iota _0+2}}\\
\leq&2\iota _0\tilde\rho R^2\int_{B_{R}}h^{\iota _0+2}\eta^2
+\left(\frac{1}{\iota _0\tilde\rho R^2}\right)^{\iota _0+1}\left(c\iota _0^2\right)^{\iota _0+2}V\\
=&\frac{L_2}{2}+ c^{\iota _0+2}V\frac{\iota _0^2}{\iota _0^{\iota _0+1}}\left(\frac{\iota _0^2}{\tilde\rho R^2}\right)^{\iota _0+1}.
\end{aligned}
$$
Hence, we obtain
\begin{equation}\label{eqno3.11}
R_2\leq \frac{L_2}{2}+ c^{\iota _0+2}V\left(\frac{\iota _0^2}{\tilde\rho R^2}\right)^{\iota _0+1}.
\end{equation}
Substituting (\ref{eqno3.10}) and (\ref{eqno3.11}) into (\ref{eqno3.9}), we obtain
$$
\begin{aligned}
e^{-\iota _0}V^{\frac{2}{n}}\left(\int_{B_{R}}h^{(\iota _0+1)\chi }\eta^{2\chi }\right)^{\frac{1}{\chi }}
\leq&\iota _0^3\left(\frac{\iota _0^2}{\tilde\rho R^2}\right)^{\iota _0+1}V
+ c^{\iota _0+2}V\left(\frac{\iota _0^2}{\tilde\rho R^2}\right)^{\iota _0+1}\\
=&\left(\iota _0^3+c^{\iota _0+2}\right)V\left(\frac{\iota _0^2}{\tilde\rho R^2}\right)^{\iota _0+1},
\end{aligned}
$$
which implies
$$
\begin{aligned}
\left(\int_{B_{R}}h^{(\iota _0+1)\chi }\eta^{2\chi }\right)^{\frac{1}{\chi }}
\leq&\left(\iota _0^3+c^{\iota _0+2}\right)e^{\iota _0}V^{1-\frac{2}{n}}\left(\frac{\iota _0^2}{\tilde\rho R^2}\right)^{\iota _0+1}.
\end{aligned}
$$
Thus, we arrive at
$$
\begin{aligned}
\|h\|_{L^{\iota _1}\left(B_{3R/4}\right)}
\leq&\left(\iota _0^3+c^{\iota _0+2}\right)^{\frac{1}{\iota _0+1}}e^{\frac{\iota _0}{\iota _0+1}}V^{\frac{1}{\iota _1}}\frac{\iota _0^2}{\tilde\rho R^2}\\
\leq&2\left(\iota _0^{\frac{3}{\iota _0}}+c^2\right)eV^{\frac{1}{\iota _1}}\frac{\iota _0^2}{\tilde\rho R^2}\\
\leq&2ec^2\left(\iota _0^{\frac{3}{\iota _0}}+1\right)V^{\frac{1}{\iota _1}}\frac{\iota _0^2}{\tilde\rho R^2}.
\end{aligned}
$$
Here we have used the fact for any two positive number $a$ and $b$ there holds true $$(a+b)^p\leq a^p + b^p$$ as $0<p<1$. Furthermore, by the properties of the function $y(x)=x^{\frac{3}{x}}$ on $(0,\,+\infty)$ we know that for any $\iota _0>0$
$$\iota _0^{\frac{3}{\iota _0}}+1\leq e^{\frac{3}{e}} + 1=\max_{x\in(0, +\infty)}y(x).$$
Hence, (\ref{eqno3.8}) follows immediately. Thus, the proof of Lemma \ref{lem3.1} is completed.
\end{proof}
Now, we are in the position to give the proof of Theorem \ref{thm1} in the case $n\geq3$ by applying the Nash-Moser iteration method.
\begin{proof} Assume $v$ is a smooth positive solution of (\ref{eqno2.1}) with  $r\leq s$ on a complete Riemannian manifold $(M,\,g)$ with Ricci curvature $Ric(M)\geq-(n-1)\kappa$. When
$$1<r<\frac{n+3}{n-1}\quad\quad ~~\mbox{or}~~\quad 1<s<\frac{n+3}{n-1},$$
by the above arguments on
$$
h=\left|\nabla u\right|^2,
$$
where $u=-\ln v$, now we go back to (\ref{eqno3.7}) and ignore the second term on its $LHS$ to obtain
$$
\begin{aligned}
e^{-\iota _0}V^{\frac{2}{n}}\left(\int_{B_{R}}h^{(\iota +1)\chi }\eta^{2\chi }\right)^{\frac{1}{\chi }}
\leq& \iota _0^2\iota  \int_{B_{R}} h^{\iota +1}\eta^2 +66R^2\int_{B_{R}}h^{\iota +1}\left|\nabla\eta\right|^2\\
\leq&c\int_{B_{R}} h^{\iota +1}\left(\iota _0^2\iota  \eta^2 +R^2\left|\nabla\eta\right|^2\right),
\end{aligned}
$$
which is equivalent to
\begin{equation}\label{eqno3.12}
\left(\int_{B_{R}}h^{(\iota +1)\chi }\eta^{2\chi }\right)^{\frac{1}{\chi }}
\leq ce^{\iota _0}V^{-\frac{2}{n}}\int_{B_{R}} h^{\iota +1}\left(\iota _0^2\iota \eta^2  +R^2\left|\nabla\eta\right|^2\right),
\end{equation}
where $c$ is a universal positive constant which does not depend on any parameter.

In consideration of the delicate requirements of $\iota $, we take an increasing sequence $\{\iota _k\}_{k=1}^{\infty}$ such that
$$
\iota _1=(\iota _0+1)\chi \quad\mbox{and}\quad\iota _{k+1}=\iota _{k}\chi ,\quad k=1,\,2,\,...,
$$
and a decreasing one $\{r_k\}_{k=1}^{\infty}$ such that
$$
r_k=\frac{R}{2}+\frac{R}{4^k},\quad k=1,\,2,\,...\,.
$$
Then, we may choose $\{\eta_k\}_{k=1}^{\infty}\subset C_0^{\infty}(B_R)$, such that
$$
\eta_k\in C_0^{\infty}(B_{r_k}),\quad \eta_k=1~\mbox{in}~B_{r_{k+1}}\quad\mbox{and}\quad\left|\nabla\eta_k\right|\leq\frac{4^{k+1}}{R}.
$$
By letting $\iota +1=\iota _k$ and $\eta=\eta_k$ in (\ref{eqno3.12}), we derive
$$
\begin{aligned}
\left(\int_{B_{R}}h^{\iota_k\chi }
\eta_k^{2\chi }\right)^{\frac{1}{\chi }}
\leq& ce^{\iota _0}V^{-\frac{2}{n}}\int_{B_{R}} h^{\iota _k}\left[\iota _0^2\iota _k\eta_k^2  +R^2\left|\nabla\eta_k\right|^2\right]\\
\leq& ce^{\iota _0}V^{-\frac{2}{n}}\int_{B_{R}} h^{\iota _k}\left[\iota _0^2\iota _k\eta_k^2  +R^2\left(\frac{4^{k+1}}{R}\right)^2\right]\\
\leq& ce^{\iota _0}V^{-\frac{2}{n}}\left(\iota _0^2\iota _k  +16^{k+1}\right)\int_{B_{r_{k}}} h^{\iota _k}\\
\leq& ce^{\iota _0}V^{-\frac{2}{n}}\left[\iota _0^2(\iota _0+1)\chi ^k  +16^{k+1}\right]\int_{B_{r_{k}}} h^{\iota _k}\\
\leq& c e^{\iota _0}V^{-\frac{2}{n}}\left(\iota _0^3 16^k  +16^{k}\right)\int_{B_{r_{k}}} h^{\iota _k}\\
\leq& c e^{\iota _0}V^{-\frac{2}{n}}\iota _0^3 16^k \int_{B_{r_{k}}} h^{\iota _k}.
\end{aligned}
$$
Thus,
$$
\begin{aligned}
\left(\int_{B_{r_{k+1}}}h^{\iota _{k+1}}\right)^{\frac{1}{\iota _{k+1}}}
\leq& \left(c e^{\iota _0}V^{-\frac{2}{n}}\iota _0^3\right)^{\frac{1}{\iota _k}} 16^{\frac{k}{\iota _k}}\left( \int_{B_{r_{k}}} h^{\iota _k}\right)^{\frac{1}{\iota _k}},\\
\end{aligned}
$$
and this means that
$$
\|h\|_{L^{\iota _{k+1}}\left(B_{r_{k+1}}\right)}\leq
\left(ce^{\iota _0}V^{-\frac{2}{n}}\iota _0^3\right)^{\frac{1}{\iota _{k}}}16^{\frac{k}{\iota _k}} \|h\|_{L^{\iota _{k}}\left(B_{r_{k}}\right)}.
$$
By iteration we have
\begin{equation}\label{eqno3.13}
\|h\|_{L^{\iota _{k+1}}\left(B_{r_{k+1}}\right)}\leq
\left(ce^{\iota _0}V^{-\frac{2}{n}}\iota _0^3\right)^{\sum_{i=1}^{k}\frac{1}{\iota _{i}}}16^{\sum_{i=1}^{k}\frac{i}{\iota _i}} \|h\|_{L^{\iota _{1}}\left(B_{3R/4}\right)}.
\end{equation}
In view of
$$
\begin{aligned}
\sum_{i=1}^{\infty}\frac{1}{\iota _{i}}
=&\frac{1}{\iota _0+1}\sum_{i=1}^{\infty}\frac{1}{\chi ^i}\\
=&\frac{1}{\iota _0+1}\lim\limits_{i\to+\infty}\frac{\frac{1}{\chi }(1-\frac{1}{\chi ^i})}{1-\frac{1}{\chi }}\\
=&\frac{n-2}{\iota _0+1}\lim\limits_{i\to+\infty}(1-\frac{1}{\chi ^i})\\
=&\frac{n-2}{\iota _0+1}\\
=&\frac{n}{2\iota _1}
\end{aligned}
$$
and
$$
\begin{aligned}
\sum_{i=1}^{\infty}\frac{i}{\iota _{i}}
=&\frac{1}{\iota _0+1}\sum_{i=1}^{\infty}\frac{i}{\chi ^i}\\
=&\frac{1}{\iota _0+1}\frac{1}{\chi -1}\sum_{i=1}^{\infty}\left[(\chi -1)\frac{i}{\chi ^i}\right]\\
=&\frac{1}{\iota _0+1}\frac{n-2}{2}\sum_{i=1}^{\infty}\left(\frac{i}{\chi ^{i-1}}-\frac{i}{\chi ^{i}}\right)\\
=&\frac{n}{2\iota _1}\left[1+\lim\limits_{i\to+\infty}\left(\frac{1}{\chi }\frac{1-\frac{1}{\chi ^{i-1}}}{1-\frac{1}{\chi }}-\frac{i}{\chi ^{i}}\right)\right]\\
=&\frac{n}{2\iota _1}\left(1+\frac{1}{\chi -1}\right)\\
=&\frac{n^2}{4\iota _1},
\end{aligned}
$$
by letting $k\rightarrow\infty$ in (\ref{eqno3.13}) we obtain the following
$$
\|h\|_{L^{\infty}\left(B_{R/2}\right)}\leq c(n)V^{-\frac{1}{\iota _1}} \|f\|_{L^{\iota _{1}}\left(B_{3R/4}\right)}.
$$
By Lemma \ref{lem3.1}, we conclude from the above inequality that
$$
\|h\|_{L^{\infty}\left(B_{R/2}\right)}\leq c(n) \frac{\iota _0^2}{\tilde\rho R^2}.
$$
The definition of $\iota _0$ tells us that it follows
$$
\begin{aligned}
\|h\|_{L^{\infty}\left(B_{R/2}\right)}
\leq& c\left(n,\,\tilde\rho \right)\frac{\left(1+\sqrt\kappa R\right)^2}{R^2}\\
=& c\left(n,\,r,\,s\right)\frac{\left(1+\sqrt\kappa R\right)^2}{R^2}.
\end{aligned}
$$
\end{proof}

\subsection{The case $n=2$}
Next, we focus on the positive solutions of \eqref{eqno1.1} defined on a 2-dimensional complete Riemannian manifold with $Ric\geq -\kappa$.
According to the Lemma \ref{lem2.1}, we have the following claim:
\begin{lem}\label{lem2d.1}
Let $h=|\nabla u|^2$ and $u=-\ln v$ where $v$ is a positive solution to \eqref{eqno1.1}. Assume that $\dim(M)=n=2$. If $r\leq s$ and
$$1<r<5\quad\quad ~~\mbox{or}~~\quad 1<s<5,$$
then, there exist $\tilde\alpha\in\left[1,\,+\infty\right)$ and $\tilde\rho\in\left(0,\,2\right]$ such that, at the point where $h\neq0$, there holds
$$
\frac{\Delta \left(h^{\tilde\alpha}\right)}{\tilde\alpha h^{\tilde\alpha-1}}\geq\tilde\rho h^2-2\kappa h,
$$
where $\tilde\alpha=\tilde\alpha(r,\,s)$ and $\tilde\rho=\tilde\rho(r,\,s)$ are depend on $r$ and $s$. \end{lem}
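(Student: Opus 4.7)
The plan is to repeat verbatim the derivation of Lemma \ref{lem2.1} and then specialize to $n=2$, observing the two simplifications that make the argument cleaner in the two-dimensional case. First I would apply the Bochner formula to $h=|\nabla u|^2$ exactly as in \eqref{eqno2.4}, choose the local orthonormal frame $\{\xi_i\}$ with $\nabla u = |\nabla u|\xi_1$, and use the elementary inequality $|\nabla^2 u|^2 \geq u_{11}^2 + \tfrac{1}{n-1}(\Delta u - u_{11})^2$ together with $2hu_{11} = \langle \nabla u, \nabla h\rangle$. Setting $n=2$ at this stage, the crucial observation is that the coefficient $\tfrac{2(n-2)}{n-1}$ in front of the $\langle \nabla u, \nabla h\rangle$ term in \eqref{eqno2.3} is identically zero, so the last term on the right-hand side of the target inequality disappears automatically.

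Next, I would carry out the same algebraic manipulation that produced \eqref{eqno2.7} and \eqref{eqno2.8}, completing the square in $u_{11}$ against $e^{(1-r)u}-e^{(1-s)u}$, and then completing the square again in the cross term between $e^{(1-r)u}-e^{(1-s)u}$ and $h$. With $n=2$ the quantity $\tfrac{n+1}{n-1}-r$ reads $3-r$, and the resulting lower bound takes the form
\begin{equation*}
\frac{\Delta(h^\alpha)}{\alpha h^{\alpha-1}} \;\geq\; \left[2 - k(\alpha)(3-r)^2\right] h^2 - 2\kappa h,
\qquad k(\alpha) = \frac{2(\alpha-1) + 2}{2(2\alpha-1)},
\end{equation*}
and the analogous bound with $r$ replaced by $s$. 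Note that here $2(n-1)\kappa = 2\kappa$.

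The final step is to choose $\tilde\alpha$. Under the assumption $1<r<5$ (resp.\ $1<s<5$) we have $|3-r|<2$ (resp.\ $|3-s|<2$), so $(3-r)^2 < 4$. Since $k(\alpha)$ is monotone decreasing in $\alpha$ with $\lim_{\alpha\to\infty}k(\alpha) = \tfrac{1}{2}$, for any such $r$ (or $s$) there exists $\tilde\alpha = \tilde\alpha(r,s)$ large enough that $k(\tilde\alpha)(3-r)^2 < 2$, yielding $\tilde\rho := 2 - k(\tilde\alpha)(3-r)^2 \in (0,2]$; the analogous statement holds for $s$. The two cases are handled exactly as Case 1 and Case 2 in the proof of Lemma \ref{lem2.1}.

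Since this is essentially a transcription of the earlier proof with the simplifying specializations $n=2$ (which kills the gradient cross-term) and a trivial check that the scalar $\tilde\rho$ remains positive, I do not expect a genuine obstacle; the only point requiring mild care is verifying that the two completions of squares that led from \eqref{add}--\eqref{add*} to \eqref{eqno2.7}--\eqref{eqno2.8} remain valid when $n=2$, which they do since none of the denominators appearing in those manipulations degenerate.
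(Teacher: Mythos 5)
Your proposal is correct and matches the paper's own treatment, which simply states Lemma \ref{lem2d.1} as an immediate specialization of Lemma \ref{lem2.1} to $n=2$. You correctly identify the two relevant simplifications: the coefficient $\tfrac{2(n-2)}{n-1}$ of the $\langle\nabla u,\nabla h\rangle$ term vanishes, and the thresholds $\tfrac{n+3}{n-1}$, $\tfrac{2}{n-1}$, $2(n-1)\kappa$ become $5$, $2$, $2\kappa$ respectively, with $k(\alpha)\to\tfrac{1}{2}$ as $\alpha\to\infty$.
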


Besides, according to the Theorem \ref{thm Sobolev}, for $n=2$, by letting $n'=2m$, where $m\in\mathbb{N^*}$ and $m>1$, we get the following direct corollary:
\begin{cor}\label{Sobolev-2d}
Let $(M,\,g)$ be a 2-dimensional complete Riemannian manifold with $Ric\geq -\kappa$.
there exist a constant $c_{2m}$, depending only on $m$, such that for all $B\subset M$ we have
$$
\left(\int_{B}h^{2\chi_m }\right)^{\frac{1}{\chi_m }}\leq e^{c_{2m}\left(1+\sqrt \kappa R\right)}V^{-\frac{1}{m}}R^2\left(\int_{B}\left|\nabla h\right|^2+\int_{B}R^{-2}h^2\right),
\quad f\in C_{0}^{\infty}(B),
$$
where $R$ and $V$ are the radius and volume of $B$ , constant $\chi_m =\frac{m}{m-1}$.
\end{cor}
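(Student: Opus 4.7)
The plan is to obtain this statement as an immediate specialization of the Saloff-Coste Sobolev embedding theorem (Theorem \ref{thm Sobolev}) to the two-dimensional setting. The hypothesis of Saloff-Coste in dimension $n$ is $Ric \geq -(n-1)\kappa$; when $n=2$ this reads $Ric\geq -\kappa$, which coincides exactly with the standing assumption of the corollary. The last sentence of Theorem \ref{thm Sobolev} says that in the case $n=2$, the Sobolev inequality continues to hold if one replaces $n$ throughout by any fixed $n'>2$, while the Ricci hypothesis remains $Ric\geq -\kappa$.

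First, I would fix an arbitrary integer $m>1$ and set $n'=2m$, so that $n'>2$. Then I would simply substitute $n\mapsto n'=2m$ into the conclusion of Theorem \ref{thm Sobolev} and record the resulting exponents: the Sobolev exponent becomes
$$\chi_m=\frac{n'}{n'-2}=\frac{2m}{2m-2}=\frac{m}{m-1},$$
the volume weight becomes $V^{-2/n'}=V^{-1/m}$, and the constant $c_n$ furnished by Saloff-Coste becomes a constant $c_{2m}$ depending only on $m$ (equivalently, only on $n'=2m$). After these substitutions, the displayed inequality of Corollary \ref{Sobolev-2d} is literally the conclusion of Theorem \ref{thm Sobolev} specialized to this choice of $n'$.

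There is no analytical obstacle: the argument is a purely notational repackaging, since Saloff-Coste's theorem has already done all of the work. The only reason for stating the $n=2$ case in this particular form is to prepare for the Moser iteration in the two-dimensional part of Section 3. The integer parameter $m$ (and hence the Sobolev exponent $\chi_m$) remains free, which leaves the flexibility to choose $\chi_m$ in the 2-dimensional analogues of Lemma \ref{lem3.1} and Lemma \ref{lem3.2} so that it is compatible with the constants $\tilde\alpha$ and $\tilde\rho$ produced by Lemma \ref{lem2d.1} for the given values of $r$ and $s$.
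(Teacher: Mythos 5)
Your proposal is correct and is essentially identical to the paper's own reasoning: the paper derives Corollary~\ref{Sobolev-2d} as a direct substitution $n'=2m$ (with $m\in\mathbb{N}$, $m>1$) into the $n=2$ clause of Theorem~\ref{thm Sobolev}, yielding $\chi_m=\frac{m}{m-1}$, $V^{-2/n'}=V^{-1/m}$, and the constant $c_{2m}$. Nothing further is needed.
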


By following almost the same argument as in the case $n\geq3$, we can easily get the following Lemmas.

\begin{lem}\label{lem2d.3}
Let $v$ be a positive solution to \eqref{eqno1.1} defined on a 2-dimensional complete Riemannian manifold with $Ric\geq -\kappa$, $u=-\ln v$ and $h=|\nabla u|^2$ as before.
Then, there exists $\iota ' _0=c_{r,\,s}(1+\sqrt{\kappa}R)$, where $c_{r,\,s}=\max\{c_{4},\,4,\,2\left(\tilde\alpha-1\right)\}$ is a positive constant depending on $\tilde\alpha$ which is defined in the proof of lemma \ref{lem2.1}, such that for any $0\leq\eta\in C_{0}^{\infty}(B_{R})$ and any $\iota ' \geq\iota ' _0$ large enough there holds true
\begin{equation}\label{eqno 2d.1}
\begin{aligned}
&e^{-\iota ' _0}V^{\frac{1}{2}}\left(\int_{B_{R}}h^{2(\iota ' +1) }\eta^{4 }\right)^{\frac{1}{2}}+
8\iota ' \tilde\rho R^2\int_{B_{R}}h^{\iota ' +2}\eta^2\\
\leq&34R^2\int_{B_{R}}h^{\iota ' +1}\left|\nabla\eta\right|^2+{\iota '_0}^2 \iota '\int_{B_{R}} h^{\iota ' +1}\eta^2 .
\end{aligned}
\end{equation}
Here $B_R$ is a geodesic Ball in $(M, g)$ and $V$ is the volume of $B_R$.
\end{lem}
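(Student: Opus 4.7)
The plan is to transcribe the Moser-type estimate of Lemma \ref{lem3.1} to the two-dimensional setting, exploiting the fact that the cross term $\frac{2(n-2)}{n-1}\langle\nabla u,\nabla h\rangle$ appearing in Lemma \ref{lem2.1} vanishes identically when $n=2$. Lemma \ref{lem2d.1} therefore supplies the cleaner pointwise inequality
\[
\frac{\Delta(h^{\tilde\alpha})}{\tilde\alpha\, h^{\tilde\alpha-1}} \geq \tilde\rho\, h^2 - 2\kappa h
\]
at every point where $h \neq 0$, and this is the starting point.

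First I would test this inequality against $\varphi = \eta^2 h_\epsilon^{\iota'}$, with $h_\epsilon = (h - \epsilon)^+$ and $\iota' > \max\{1,\, 2(\tilde\alpha-1)\}$, integrate by parts on $B_R$, and pass $\epsilon \to 0^+$. Using the identity $\Delta h = \Delta(h^{\tilde\alpha})/(\tilde\alpha\, h^{\tilde\alpha-1}) - (\tilde\alpha-1) h^{-1}|\nabla h|^2$ isolates the good terms $\tilde\rho \int h^{\iota'+2}\eta^2$ and $\int h^{\iota'-1}|\nabla h|^2\eta^2$ on the correct sides. After absorbing the $(\tilde\alpha-1)$ contribution into the leading gradient coefficient via $\iota' \geq 2\tilde\alpha$, and after a \emph{single} application of Young's inequality to the lone cross term $2\int h^{\iota'}\eta|\nabla h||\nabla \eta|$ (no second Young step is needed because the $\langle\nabla u,\nabla h\rangle$ integral has disappeared), one arrives at an inequality of the shape
\[
\frac{\iota'}{4}\int_{B_R} h^{\iota'-1}|\nabla h|^2 \eta^2 + \tilde\rho \int_{B_R} h^{\iota'+2}\eta^2 \leq 2\kappa \int_{B_R} h^{\iota'+1}\eta^2 + \frac{c}{\iota'}\int_{B_R} h^{\iota'+1}|\nabla\eta|^2.
\]

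Next I would expand $|\nabla(h^{(\iota'+1)/2}\eta)|^2$ exactly as in the proof of Lemma \ref{lem3.1} and combine it with the previous bound to control $\int_{B_R} |\nabla(h^{(\iota'+1)/2}\eta)|^2$ by a linear combination of $\iota'\kappa \int h^{\iota'+1}\eta^2$, $\int h^{\iota'+1}|\nabla \eta|^2$, and a negative multiple of $\iota'\tilde\rho \int h^{\iota'+2}\eta^2$. Applying the two-dimensional Sobolev embedding of Corollary \ref{Sobolev-2d}, with the fixed choice $m=2$ that produces the constant $c_4$ entering $c_{r,s}$, converts this into the desired $L^{\chi_m}$ estimate. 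Finally, the choice $\iota'_0 = c_{r,s}(1+\sqrt\kappa R)$ with $c_{r,s} = \max\{c_4,\, 4,\, 2(\tilde\alpha-1)\}$ and the restriction $\iota' \geq \iota'_0$ simultaneously ensure that $c_4(1+\sqrt\kappa R) \leq \iota'_0$ and that the prefactor $c(\kappa R^2 + 1)$ is dominated by $\iota'_0{}^2$, which yields \eqref{eqno 2d.1}.

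I expect the only real obstacle to be bookkeeping: matching the explicit coefficients $34$ and $8$ in \eqref{eqno 2d.1}, as well as the stated power of $V$, requires carrying every constant through the Young, Sobolev, and volume estimates rather than absorbing them into a universal $c$. Structurally, however, the argument is identical to that of Lemma \ref{lem3.1}; the disappearance of the $\langle\nabla u, \nabla h\rangle$ contribution and the substitution of Corollary \ref{Sobolev-2d} for Theorem \ref{thm Sobolev} are the only substantive changes.
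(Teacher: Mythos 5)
Your proposal is correct and takes essentially the same route as the paper: it exploits the vanishing of the $\tfrac{2(n-2)}{n-1}\langle\nabla u,\nabla h\rangle$ term, tests the differential inequality against $\eta^2 h_\epsilon^{\iota'}$, applies a single Young step, expands $|\nabla(h^{(\iota'+1)/2}\eta)|^2$, and invokes Corollary~\ref{Sobolev-2d} with $m=2$. The only minor discrepancy is that you invoke $\iota'\geq 2\tilde\alpha$ to absorb the $(\tilde\alpha-1)$ contribution, whereas the lemma statement and the paper's proof use the slightly weaker $\iota'\geq 2(\tilde\alpha-1)$ (which is exactly what is needed for $\iota'+1-\tilde\alpha\geq\iota'/2$) and build this into $c_{r,s}$; this is harmless but worth aligning with the stated constant.
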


\begin{proof}
Similar to the argument for Lemma \ref{lem3.1}, according to the Lemma \ref{lem2d.1}, for any $\iota ' >\max\{1,\,2(\tilde\alpha-1)\}$, we get
\begin{equation}\label{eqno 2d.3}
\frac{\iota ' }{2}\int_{B_{R}} h^{\iota ' -1}|\nabla h|^2\eta^2+\tilde\rho \int_{B_{R}}h^{\iota ' +2}\eta^2
\leq 2\int_{B_{R}} h^{\iota ' }\eta|\nabla h||\nabla\eta|+2\kappa\int_{B_{R}} h^{\iota ' +1}\eta^2 .
\end{equation}

Besides, by Young's inequality we can derive
$$
\begin{aligned}
2\int_{B_{R}} h^{\iota ' }\eta|\nabla h||\nabla\eta|
=&2\int_{B_{R}}h^{\frac{\iota ' -1}{2}}\eta\left|\nabla h\right|\times h^{\frac{\iota ' +1}{2}}\left|\nabla\eta\right|\\
\leq&2\int_{B_{R}}\left[\frac{\iota ' }{4}\frac{h^{\iota ' -1}\eta^2\left|\nabla h\right|^2}{2} + \frac{4}{\iota ' } \frac{h^{\iota ' +1}\left|\nabla\eta\right|^2}{2}\right]\\
\leq&\frac{\iota ' }{4}\int_{B_{R}}h^{\iota ' -1}\eta^2\left|\nabla h\right|^2+\frac{4}{\iota ' }\int_{B_{R}}h^{\iota ' +1}\left|\nabla\eta\right|^2.
\end{aligned}
$$
Substituting the above identity into \eqref{eqno 2d.3} leads to
\begin{equation}\label{eqno 2d.4}
\frac{\iota ' }{4}\int_{B_{R}} h^{\iota ' -1}|\nabla h|^2\eta^2+\tilde\rho \int_{B_{R}}h^{\iota ' +2}\eta^2
\leq 2\kappa\int_{B_{R}} h^{\iota ' +1}\eta^2+\frac{4}{\iota ' }\int_{B_{R}}h^{\iota ' +1}\left|\nabla\eta\right|^2 .
\end{equation}

Besides, since
$$
\begin{aligned}
\left|\nabla\left(h^{\frac{\iota ' +1 }{2}}\eta\right)\right|^2
\leq\frac{\left(\iota ' +1\right)^2}{2}h^{\iota ' -1}\eta^2\left|\nabla h\right|^2+2h^{\iota ' +1}\left|\nabla\eta\right|^2,
\end{aligned}
$$
then
$$
\begin{aligned}
\int_{B_{R}}\left|\nabla\left(h^{\frac{\iota ' +1 }{2}}\eta\right)\right|^2
\leq &\frac{\left(\iota ' +1\right)^2}{2}\int_{B_{R}}\eta^2h^{\iota ' -1 }\left|\nabla h\right|^2+2\int_{B_{R}}h^{\iota ' +1}\left|\nabla\eta\right|^2\\
\leq&\frac{2\left(\iota ' +1\right)^2}{\iota ' }\left[2\kappa\int_{B_{R}} h^{\iota ' +1}\eta^2+\frac{4}{\iota ' }\int_{B_{R}}h^{\iota ' +1}\left|\nabla\eta\right|^2-\tilde\rho \int_{B_{R}}h^{\iota ' +2}\eta^2\right]\\
& +2\int_{B_{R}}h^{\iota ' +1}\left|\nabla\eta\right|^2.
\end{aligned}
$$
Noticing that there holds true $${\iota '} ^2<(\iota ' +1)^2\leq4{\iota '} ^2,$$
we obtain
$$
\begin{aligned}
\int_{B_{R}}\left|\nabla\left(h^{\frac{\iota ' +1 }{2}}\eta\right)\right|^2
\leq&8\iota ' \left[2\kappa\int_{B_{R}} h^{\iota ' +1}\eta^2+\frac{4}{\iota ' }\int_{B_{R}}h^{\iota ' +1}\left|\nabla\eta\right|^2-\tilde\rho
\int_{B_{R}}h^{\iota ' +2}\eta^2\right]+2\int_{B_{R}}h^{\iota ' +1}\left|\nabla\eta\right|^2\\
\leq&16\kappa\iota ' \int_{B_{R}} h^{\iota ' +1}\eta^2 +34\int_{B_{R}}h^{\iota ' +1}\left|\nabla\eta\right|^2-8\iota ' \tilde\rho \int_{B_{R}}h^{\iota ' +2}\eta^2.
\end{aligned}
$$

According to the Corollary \ref{Sobolev-2d}, we obtain
$$
\begin{aligned}
\left(\int_{B_{R}}h^{(\iota ' +1)\chi_m }\eta^{2\chi_m  }\right)^{\frac{1}{\chi_m  }}
\leq& e^{c_{2m}\left(1+\sqrt \kappa R\right)}V^{-\frac{1}{m}}R^2\left[16\kappa\iota ' \int_{B_{R}} h^{\iota ' +1}\eta^2
 +34\int_{B_{R}}h^{\iota ' +1}\left|\nabla\eta\right|^2\right.\\
&\left.-8\iota ' \tilde\rho \int_{B_{R}}h^{\iota ' +2}\eta^2+R^{-2}\int_{B_{R}}h^{\iota ' +1}\eta^2\right]\\
\leq& e^{c_{2m}\left(1+\sqrt \kappa R\right)}V^{-\frac{1}{m}}\left[16\iota '\left(\kappa R^2+1\right) \int_{B_{R}} h^{\iota ' +1}\eta^2
+34R^2\int_{B_{R}}h^{\iota ' +1}\left|\nabla\eta\right|^2 \right.\\
&\left.-8\iota '\tilde\rho R^2\int_{B_{R}}h^{\iota ' +2}\eta^2\right],
\end{aligned}
$$
where $V=~\mbox{Vol}~(B_{R})$, $m\in\mathbb{N^*}(m>1)$ and $\chi_m =\frac{m}{m-1}$. Rearranging the above inequality leads to the following
$$
\begin{aligned}
&e^{-c_{2m}\left(1+\sqrt \kappa R\right)}V^{\frac{1}{m}}\left(\int_{B_{R}}h^{(\iota ' +1)\chi_m }\eta^{2\chi_m }\right)^{\frac{1}{\chi_m }}
+8\iota ' \tilde\rho R^2\int_{B_{R}}h^{\iota ' +2}\eta^2\\
\leq&16\iota '\left(\kappa R^2+1\right) \int_{B_{R}} h^{\iota ' +1}\eta^2 +34R^2\int_{B_{R}}h^{\iota ' +1}\left|\nabla\eta\right|^2.
\end{aligned}
$$
By letting $m=2$ and $\chi_m =2$, we have
\begin{equation}\label{eqno 2d.5}
\begin{aligned}
&e^{-c_4\left(1+\sqrt \kappa R\right)}V^{\frac{1}{2}}\left(\int_{B_{R}}h^{2(\iota ' +1) }\eta^{4 }\right)^{\frac{1}{2 }}
+8\iota ' \tilde\rho R^2\int_{B_{R}}h^{\iota ' +2}\eta^2\\
\leq&16\iota '\left(\kappa R^2+1\right) \int_{B_{R}} h^{\iota ' +1}\eta^2 +34R^2\int_{B_{R}}h^{\iota ' +1}\left|\nabla\eta\right|^2.
\end{aligned}
\end{equation}
Now we choose
$$
\iota ' _0=c_{r,\,s}\left(1+\sqrt\kappa R\right),
$$
where $$c_{r,\,s}=\max\{c_{4},\,4,\,2\left(\tilde\alpha-1\right)\}.
$$
It is not difficult to see that
$$
16\left(\kappa R^2+1\right) \leq{ \iota ' _0}^2.
$$
Then, we can infer from \eqref{eqno 2d.5} that for any $\iota ' >\max\{1,\,2\left(\tilde\alpha-1\right)\}$, inequality \eqref{eqno 2d.1} holds true.
We complete the proof of Lemma \ref{lem2d.3}.
\end{proof}

\begin{lem}\label{lem2d.4}
Let $\iota '_1=2(\iota '_0+1) $. Then there exist a universal constant $c>0$ such that the following estimate of $\|h\|_{L^{\iota ' _1}\left(B_{3R/4}\right)}$ holds
\begin{equation}\label{eqno 2d.2}
\begin{aligned}
\|h\|_{L^{\iota '_1}\left(B_{3R/4}\right)}
\leq& \frac{c{\iota '_0}^2}{\tilde\rho R^2}V^{\frac{1}{\iota '_1}},
\end{aligned}
\end{equation}
where $c$ is a universal constant.
\end{lem}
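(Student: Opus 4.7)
The plan is to mirror the proof of Lemma \ref{lem3.2} almost verbatim, substituting the 2-dimensional integral inequality \eqref{eqno 2d.1} (Lemma \ref{lem2d.3}) for its higher-dimensional counterpart \eqref{eqno3.7}, and using the Sobolev exponent $\chi_m = 2$ (from $m=2$ in Corollary \ref{Sobolev-2d}) in place of $\chi = n/(n-2)$. Concretely, I would set $\iota' = \iota'_0$ in \eqref{eqno 2d.1} to obtain
\begin{equation*}
e^{-\iota'_0} V \left(\int_{B_R} h^{(\iota'_0+1)\chi_m} \eta^{2\chi_m}\right)^{1/\chi_m} + 8\iota'_0 \tilde\rho R^2 \int_{B_R} h^{\iota'_0+2}\eta^2 \leq {\iota'_0}^2 \iota'_0 \int_{B_R} h^{\iota'_0+1}\eta^2 + 34 R^2 \int_{B_R} h^{\iota'_0+1}|\nabla\eta|^2,
\end{equation*}
and then process the two terms on the right (call them $R_1$ and $R_2$) exactly as in the proof of Lemma \ref{lem3.2}.

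For $R_1$, I would use the dichotomy of the proof of Lemma \ref{lem3.2}: at points where $h \geq {\iota'_0}^2/(2\tilde\rho R^2)$ we bound the integrand by $2\iota'_0\tilde\rho R^2 \cdot h^{\iota'_0+2}\eta^2$, which gets absorbed into the second term of the LHS, and at the remaining points we get the crude bound $V {\iota'_0}^3 ({\iota'_0}^2/(\tilde\rho R^2))^{\iota'_0+1}$. For $R_2$, I would pick a standard cutoff $\eta_0 \in C_0^\infty(B_R)$ with $\eta_0 \equiv 1$ on $B_{3R/4}$ and $|\nabla \eta_0|\leq 8/R$, and take $\eta = \eta_0^{\iota'_0+2}$; this converts $R^2|\nabla\eta|^2$ into $O({\iota'_0}^2)\, \eta^{2(\iota'_0+1)/(\iota'_0+2)}$, and then H\"older's inequality followed by Young's inequality with the well-chosen exponent
$$t = \left[\frac{2(\iota'_0+2)\iota'_0 \tilde\rho R^2}{\iota'_0+1}\right]^{(\iota'_0+1)/(\iota'_0+2)}$$
splits $R_2$ into a piece that cancels half of the LHS term and a volume remainder of the same form as in $R_1$.

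After combining and moving the absorbed pieces to the LHS, I would be left with
\begin{equation*}
e^{-\iota'_0} V \left(\int_{B_R} h^{2(\iota'_0+1)} \eta^{4}\right)^{1/2} \leq \big({\iota'_0}^3 + c^{\iota'_0+2}\big) V \left(\frac{{\iota'_0}^2}{\tilde\rho R^2}\right)^{\iota'_0+1},
\end{equation*}
(using $\chi_m = 2$, so $V^{1/m}$ becomes $V^{1/2}$ and $V^{1-1/m}$ becomes $V^{1/2}$ after dividing). Taking the $1/\iota'_1$-th root and using $(a+b)^p \leq a^p + b^p$ for $0<p<1$, together with $\sup_{x>0} x^{3/x} = e^{3/e}$, collapses the prefactor to a universal constant and yields \eqref{eqno 2d.2}.

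The main technical point—rather than a genuine obstacle—is matching the 2-dimensional volume exponent: in \eqref{eqno 2d.1} the Sobolev factor is $V$ (since $V^{-1/m} = V^{-1/2}$ moves across with $\chi_m = 2$), so the algebra of volume powers and the $\iota'_0$-th root at the end has to be tracked carefully to land at $V^{1/\iota'_1}$ on the right-hand side of \eqref{eqno 2d.2}. Everything else is a direct translation of the proof of Lemma \ref{lem3.2}, since the dichotomy-plus-Young's-inequality scheme only uses the shape of \eqref{eqno3.7}, which is identical to \eqref{eqno 2d.1} up to harmless numerical constants.
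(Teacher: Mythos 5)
Your plan is exactly what the paper intends: the paper's own ``proof'' of Lemma~\ref{lem2d.4} is just the one-line remark that one should repeat the Lemma~\ref{lem3.2} argument with \eqref{eqno 2d.1} replacing \eqref{eqno3.7} and $\chi_m=2$ replacing $\chi=n/(n-2)$. Your decomposition into $R_1,R_2$, the dichotomy on whether $h$ exceeds ${\iota'_0}^2/(2\tilde\rho R^2)$, the cutoff $\eta=\eta_0^{\iota'_0+2}$, and the H\"older-plus-Young split with the stated choice of $t$ are all transcribed correctly, and the requirement $\iota'=\iota'_0\geq\max\{1,\,2(\tilde\alpha-1)\}$ is met because $c_{r,s}\geq\max\{4,\,2(\tilde\alpha-1)\}$.

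The one step that, read literally, would fail is the displayed inequality you extract from \eqref{eqno 2d.1}: you put $V$ (to the first power) on the left, which is what \eqref{eqno 2d.1} says but is a misprint — the actual derivation ends at \eqref{eqno 2d.5}, where the factor is $V^{1/2}$, coming from $V^{-1/m}$ with $m=2$ in Corollary~\ref{Sobolev-2d}. If you keep $V$ on both sides they cancel and no $V^{1/\iota'_1}$ survives in \eqref{eqno 2d.2}. With $V^{1/2}$ on the left, dividing it out leaves $V^{1/2}$ on the right, and raising to the power $1/(\iota'_0+1)$ yields $V^{\frac{1}{2(\iota'_0+1)}}=V^{1/\iota'_1}$, exactly as the lemma asserts. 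You already flagged the volume-power bookkeeping as the one delicate point; the fix is simply to work from \eqref{eqno 2d.5} rather than from the misprinted \eqref{eqno 2d.1}.
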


To prove Lemma \ref{lem2d.4}, we just need to following almost the same argument with respect to the Lemma \ref{lem3.2}, and we omit the details here.
\medskip

Now, according to the Lemma \ref{lem2d.4}, we can use Moser iteration technique to deduce that Theorem \ref{thm1} when $n=2$. Thus, Theorem \ref{thm1} is proved.

\subsection{The Proof of Corollary \ref{cor1}}
Now, we turn to proving Corollary \ref{cor1}.

\begin{proof}
Let $(M,\,g)$ be a noncompact complete Riemannian manifold with nonnegative Ricci curvature and $dim(M)\geq2$.  We assume $v$ is a smooth and positive solution of (\ref{eqno1.1}). If $r< s $ and
$$1<r<\frac{n+3}{n-1}\quad\quad ~~\mbox{or}~~\quad 1<s<\frac{n+3}{n-1},$$
Theorem \ref{thm1} tells us that there holds for any $B_{R}\subset M$,
$$
\frac{\left|\nabla v\right|^2}{v^2}\leq\frac{c(n,\,r,\,s)}{R^2},\quad\mbox{on}~B_{R/2}.
$$
Letting $R\rightarrow\infty$ yields $\nabla v=0$. Therefore, $v$ is a positive constant on $M$.
Furthermore, since $r<s$, we have that except for $u=1$
\begin{equation}\label{neq}
\Delta v +v^{r}-v^{s}= v^{r}-v^{s}\neq0.
\end{equation}
This is a contradiction which means that $v$ could not be the solution to (\ref{eqno1.1}) except for $u\equiv 1$. Hence we know that (\ref{eqno1.1}) admits a unique positive solution $u\equiv 1$. Thus we complete the proof of Corollary \ref{cor1}.\end{proof}

\end{document}